\newcommand{\M}{\color[rgb]{.46,.2,.1}}
\newtheorem {Proposition}{Proposition}[section]
\newtheorem {Lemma}[Proposition] {Lemma}
\newtheorem {Theorem}[Proposition]{Theorem}
\newtheorem {Corollary}[Proposition]{Corollary}
\def\N{\mathbb{N}}
\def\R{\mathbb{R}}
\numberwithin{equation}{section}
\title{A note on the Regularity of \\ Center-Outward Distribution and Quantile Functions}
\author{Eustasio del Barrio$^{(1)}$\footnote{This author has been partially supported by FEDER, Spanish Ministerio de Economía y Competitividad, GrantMTM2017-86061-C2-1-P and Junta de Castilla y León, Grants VA005P17 and VA002G18}, Alberto Gonz\'alez-Sanz,$^{(2)}$ and Marc Hallin$^{(3)}$\\  $\,$ \\ 
{ $^{(1)(2)}$IMUVA, Universidad de Valladolid, Spain} \\ 
$^{(3)}$ECARES and D\' epartement de Math\' ematique, Universit\' e libre de Bruxelles, Belgium\\ $\,$ \\ 
$^{(1)}$tasio@eio.uva.es \quad $^{(2)}$alberto.gonzalez.sanz.96@gmail.com\quad $^{(3)}$mhallin@ulb.ac.be}
\begin{document}

\maketitle

\begin{abstract}
We provide sufficient conditions under wich the center-outward distribution and quantile functions introduced in Chernozhukov et al.~(2017) and Hallin~(2017) are homeomorphisms, thereby extending a recent result by Figalli \cite{Fi2}. Our approach
relies on Cafarelli's  classical regularity theory  for the solutions of the Monge-Amp\`ere equation, but has to deal with  difficulties related with the unboundedness at the origin of the density of the spherical uniform  reference measure. Our conditions are satisfied by probabillities on Euclidean space with a general (bounded or unbounded) convex support which are not covered in~\cite{Fi2}.
We provide some additional results about center-outward distribution and quantile functions, including the fact that quantile sets exhibit some weak form of convexity.
\end{abstract}

\noindent \textit{Keywords:} Optimal transportation; Monge-Amp\`ere equation; multivariate ranks; quantile contours.

\section{Introduction: center-outward distribution and quantile functions}

Univariate distribution and quantiles functions, together with their empirical counterparts and the closely related concepts of ranks and order statistics, count among the most
fundamental and useful tools in mathematical statistics. Ranks indeed are not just distribution-free: in models driven by noise with unspecified density, they generate the sub-$\sigma$-field of all distribution-free events (see \cite{delBarrioetal}), which is also the largest  sub-$\sigma$-field independent, irrespective of the underlying distribution, of the minimal sufficient $\sigma$-field generated by the order statistic; suitable rank-based procedures achieve optimality in several senses in nonparametric testing as well semiparametric efficiency (see, e.g.~\cite{HajekSidakSen}, \cite{H94}, \cite{HTribel00}, \cite{HallinWerker}). A major limitation of  the classical concepts of ranks and quantiles, however,  is that, due to the absence of a canonical ordering of $\mathbb{R}^d$ for $d\geq 2$, they do not readily extend to the multivariate context. 

The problem  is not new, and numerous attempts have been made to fill that gap by defining multivariate versions of distribution and quantiles  functions, with the ultimate goal of constructing   suitable mutivariate versions of classical rank- and quantile-based  inference procedures. The traditional definition of a multivariate distribution function is somewhat helpless in that respect, and does not   produce any satisfactory concept of quantiles---let alone a satisfactory concept of ranks (see \cite{Genest}). The componentwise approach, closely related with   copula transforms, has been studied intensively (see~\cite{PuriSen}), but does not even enjoy distribution-freeness. Nor do the so-called {\it spatial ranks} (\cite{Oja99}, \cite{Oja2010}) inspired by the  L$_1$ characterization of univariate quantiles. The whole theory of statistical depth (see \cite{Bob1},  \cite{Bob2} for authoritative surveys), in a sense, is motivated by the same objective of providing a (data-driven) ordering of $\mathbb{R}^d$ and adequate concepts of multivariate ranks (\cite{He06}) and quantiles (\cite{He97}); here again, the resulting notions fail to be distribution-free. As for the {\it Mahalanobis ranks and signs} considered, e.g.  in \cite{HPd02a}, \cite{HPd02b} or \cite{HPdV10}, they do enjoy distribution-freeness and all the desired properties expected from ranks---under the restrictive assumption, however, of elliptical symmetry. 

 This shortcoming of all available solutions has motivated the introduction, in \cite{Chernouzhukovetal} and \cite{Hallin},  of the measure transportation-based concepts of {\it Monge-Kantotovich depth},  {\it center-outward distribution} and {\it quantile functions, ranks,} and {\it signs}. These center-outward  concepts, unlike all previous ones,  are shown (see \cite{Hallin}, \cite{delBarrioetal}) to  enjoy all the properties  that make their univariate counterpart a fundamental and successful tool for statistical inference; we refer to~\cite{delBarrioetal} for more references and further discussion. 


Let ${\rm P}$ be a Borel probability measure on the real line with finite second moment and continuous distribution function $F$ and denote by ${\rm U}_{[0,1]}$  the uniform distribution on $(0,1)$: then $F$  is a solution to {\it Monge's quadratic  transportation problem}, that is, 
$$\int_{\mathbb{R}} |x-F(x)|^2 d{\rm P}(x)=\min_{T:\, T\sharp {\rm P}={\rm U}_{[0,1]}} \int_{\mathbb{R}} |x-T(x)|^2 d{\rm P}(x)$$
(see, e.g.,  \cite{Vi}) where $T\sharp P$ denotes the   {\it push  forward of   ${\rm P}$ by $T$}---namely, the distribution    of $T(X)$   under~$X\sim{\rm P}$ ($T$ a measurable map from $\mathbb{R}$ to~$(0,1)$). With generalization to higher dimension in mind, however, \cite{Chernouzhukovetal}  and~\cite{Hallin} rather  consider $F_\pm(x)=2F(x)-1$, the so-called {\it center-outward distribution function} of $\rm P$, satisfying  the transportation problem 
$$\int_{\mathbb{R}} |x-{F}_\pm(x)|^2 d{\rm P}(x)=\min_{T:\, T\sharp {\rm P}={\rm U}_{1}} \int_{\mathbb{R}} |x-T(x)|^2 d{\rm P}(x),$$
where $\mathrm{U}_1$ is  the uniform distribution over $(-1,1)$, the one-dimensional unit ball $\mathbb{B}_1$. Clearly, $F_\pm$ and~$F$ carry the same information about $\rm P$. 

The latter definition, indeed,  readily extends to arbitrary dimensions. Let  ${\rm P}$ denote a Borel probability measure on $\mathbb{R}^d$ with finite second-order  moments and Lebesgue density $p$.  Measure transportation theory  
 (see, e.g., Theorem 2.12 in \cite{Vi})  tells us that 
there exists a ${\rm P}$-a.s. unique map~${\mathbf F}_\pm$ such that 
\begin{equation}\label{Mongepb}
\int_{\mathbb{R}^d} |\mathbf{x}-{\mathbf F}_\pm(\mathbf{x})|^2 d{\rm P}(\mathbf{x})=\min_{{\mathbf T}:\, {\mathbf T}\sharp {\rm P}=\mathrm{U}_d} \int_{\mathbb{R}^d} |\mathbf{x}-{\mathbf T}(\mathbf{x})|^2 d{\rm P}(\mathbf{x})
\end{equation}
where $|\mathbf{x}|$ stands for the Euclidean norm of $\mathbf{x}$ and  
 $\mathrm{U}_d$ denotes the \textit{uniform}
distribution over the open~$d$-dimensional unit ball $\mathbb{B}_d$. The center-outward distribution function is defined as a  solution~${\mathbf F}_\pm$   of this optimal transportation problem.

By uniform over $\mathbb{B}_d$ we refer to 
  {\it spherical} uniformity, that is, $\mathrm{U}_d$ here corresponds to the uniform choice of a direction on the unit sphere $\mathbb{S}_{d-1}:=\bar{\mathbb{B}}_d -\mathbb{B}_d$ in $\mathbb{R}^d$ combined with an independent uniform choice in~$(0,1)$ of a distance to the origin. A simple change of variable shows that $\mathrm{U}_d$ has density 
\begin{equation}\label{densityUd}
 u_d(\mathbf{x})=\frac 1 {a_d|\mathbf{x}|^{d-1}} I\big[\mathbf{x}\in \mathbb{B}_d-\{\mathbf{0}\}\big]
 \end{equation}
 where $a_d=2\pi^{d/2}/\Gamma(d/2)$ denotes the area (the $(d-1)$-dimensional Hausdorff measure, see, e.g.,~\cite{EvansGariepy}) of the sphere $\mathbb{S}_{d-1}$. Note the singularity at the origin since~$\mathbf{x}\mapsto 1/|\mathbf{x}|^{d-1}$ is infinite at ${\mathbf{x}}={\mathbf{0}}$; while we safely can neglect  $\mathbf{0}$ itself, which has measure zero, by putting $u_d(\mathbf{0})=0$, $ u_d$ nevertheless remains unbounded in the vicinity of $\mathbf{0}$.

This definition of the center-outward distribution function as the solution of a  quadratic transportation problem suffers from two major limitations. First, 
  finite second-order moments are needed to make 
sense of the underlying optimization problem \eqref{Mongepb}.   Second, the distribution function~${\mathbf F}_\pm$ based on \eqref{Mongepb}  is only defined $\rm P$-a.s.; this means, for instance,  that ${\mathbf F}_\pm$
is not well defined outside the support of $\rm P$. 

The first of these two limitations has been  relaxed in \cite{Hallin} thanks to a celebrated theorem by  McCann~\cite{McCann}.  Under the assumption that $\rm P$  has finite second-order moments,  Brenier in 1991 had shown that   optimal transportation maps (hence, all versions of the $\rm P$-a.s.\ unique   solution ${\mathbf F}_\pm$ of Monge's problem~\eqref{Mongepb})  coincide $\rm P$-a.s.\  with  the Lebesgue-a.e.~gradient $\nabla \varphi$ of a convex function\footnote{The notation $\nabla \varphi$ here is used for the Lebesgue-a.e.~gradient of $\varphi$, that is, $\nabla \varphi({\bf x})$ is defined as the gradient at $\bf x$ of~$\varphi$ whenever $\varphi$ is differentiable at $\bf x$---which, for a convex $\varphi$, holds Lebesgue-a.e. Note  that, contrary to~$\nabla \varphi$, which is a.e.~unique, $\varphi$ is not---unless we impose, without loss of generality (see, e.g., Lemma 2.1 in \cite{delBarrioLoubes}), that $\varphi({\bf 0})=0$.}~$\varphi$, which has the interpretation of a {\it potential}.  More precisely,~${\mathbf F}_\pm$ a.s.\  is of the form~$\nabla \varphi$ where~$\varphi$  {\it (i)} is lower semicontinous (lsc in the sequel),   {\it (ii)} is convex, and {\it (iii)} is such that~$\nabla \varphi\sharp {\rm P}=\mathrm{U}_d$.  McCann~\cite{McCann} further showed that these last three conditions uniquely determine $\nabla \varphi$, even in the absence of  second moment assumptions, while under finite second-order moments, $\nabla \varphi$ is a solution of Monge's problem~\eqref{Mongepb}. Thus,  putting 
\begin{equation}\label{defF}{\mathbf F}_\pm(\mathbf{x}) :=\nabla \varphi(\mathbf{x}) \quad \mbox{$\mathbf{x}$-a.e.~in } \mathbb{R}^d ,
\end{equation}
 the center-outward distribution function ${\mathbf F}_\pm$ is no longer characterized as the almost surely unique  solution of an  optimization problem  \eqref{Mongepb} requiring finite moments of order two but as the unique a.e.~gradient $\nabla \varphi$ of a convex function pushing $\rm P$ forward to $\mathrm{U}_d$.  {\M We nevertheless conform to the common usage of improperly calling $\nabla\varphi$ the optimal transport pushing $\rm P$  forward  to ${\rm U}_d$.}

While taking care of the moment assumption---existence of second-order moments indeed is an embarrassing assumption when distribution and quantile functions are to be defined---the second limitation still remains. The non-unicity of ${\mathbf F}_\pm :=\nabla \varphi$, however, disappears if $\rm P$ is such that $\varphi$ is everywhere differentiable. That this is indeed the case was shown  by Figalli  in 2018 \cite{Fi2} for~$\rm P$ in the so-called class of distributions {\it with nonvanishing densities}\footnote{Precisely, the distributions $\rm P$ with densities $p$ and support ${\mathcal X}=\mathbb{R}^d$ satisfying Assumption~A below.}. For any~$\rm P$ in that class of distributions, Figalli actually establishes that  $\nabla \varphi ({\bf x})$ is a gradient for all $\bf x$ and, when restricted to 
$$\mathbb{R}^d_{({\bf 0})}:=\mathbb{R}^d\setminus\{{\bf x}: \nabla \varphi ({\bf x})\neq{\bf 0}\},$$
 a  homeomorphism between $\mathbb{R}^d_{({\bf 0})}$ and the punctured ball $\mathbb{B}_d\setminus\{{\bf 0}\}$. The latter property is quite essential if sensible---namely, closed, continuous, connected, and nested---quantile regions and contours, based on an inverse\footnote{See Section~2.1 for a precise definition.} ${\mathbf Q}_\pm$ of ${\mathbf F}_\pm$, are to be defined: see \cite{Hallin} and \cite{delBarrioetal}.

 The goal of this paper is to provide simple sufficient conditions for   Figalli's results to hold beyond the assumption of nonvanishing densities; we more particularly consider  distributions with (bounded or unbounded) convex supports. Beyond other theoretical considerations, these are the key properties required to prove a.s. convergence of the empirical center-outward distribution functions to their theoretical counterparts (see \cite{delBarrioetal}). Hence, the results of the present paper also are  extending the validity of the center-outward Glivenko-Cantelli theorem in that reference.


From a technical point of view, our main result   is Theorem \ref{th:homeomorphism} below, which relies on the classical regularity theory for solutions of  Monge-Amp\`ere equations associated with the name of of Caffarelli  (see \cite{Ca1,Ca2,Ca3}), as  discussed in Section \ref{Section2}. The use of that theory
to investigate the regularity of optimal transportation maps between two probabilities typically requires that both probabilities have  densities that are bounded and bounded away from zero over their respective supports.  Recently, under a local version of this condition, a very general regularity result of this kind has been given in~\cite{Fi3}. However, the spherical uniform reference measure  $\mathrm{U}_d$ considered here, in  dimension $d\geq 1$, yields unbounded densities at the origin, so that the results in \cite{Fi3} do not apply.\footnote{Note that the choice of the spherical  uniform reference is not a whimsical one. It preserves the independence between  $\Vert{\mathbf F}_\pm\Vert$ and ${\mathbf F}_\pm/\Vert{\mathbf F}_\pm\Vert$ (extending the independence, for $d=1$, between $\vert{F}_\pm\vert$ and sign$({F}_\pm)$) and produces simple and easily interpretable quantile contours with prescribed probability content (we refer to \cite{Hallin} for details). }
To our knowledge, the only reference dealing with this kind of unbounded density is \cite{Fi2} which, however,  requires  $\rm P$ to be supported on the whole space $\mathbb{R}^d$. Here we extend the result in \cite{Fi2} to cover the case of $\rm P$ with (bounded or unbounded) convex  supports.

The sequel of this paper is organized as follows. Our main regularity result  is established in Section~2, along with a succint account of the main elements of Cafarelli's   theory and some auxiliary results. We conclude with Section 3, which presents some new results on center-outward distribution and quantile functions. These include an asymptotic invariance property 
 extending a well-known feature of classical univariate distribution functions and  the ability of   quantile contours to  capture the shape of the bounded support of a  probability measure by convergencing (in Hausdorff distance) to the boundary of the support. Finally, we include a result on the geometry of quantile sets, showing that they turn out to exhibit a limiting form of ``lighthouse convexity''.

\section{Regularity of center-outward distribution  and quantile functions}\label{Section2}
\subsection{Center-outward quantile functions}\label{Section21}
The Introduction was focused on the distribution functions ${\bf F}_\pm$. Exchanging the roles of $\rm P$ and~${\rm U}_d$, we could have emphasized transportation from the unit ball to the support of $\rm P$, leading to the definition of the center-outward quantile function ${\bf Q}_\pm$ with, {\it mutatis mutandis}, the same   comments. 
 

%
%
%
%
%
%
%
%
%
%
Let $\rm P$ denote a Borel probability measure over $\mathbb{R}^d$ with Lebesgue density $p$.  While the center-outward distribution function is defined  as the optimal  (in the McCann sense) transport pushing~$\rm P$ forward to ${\rm U}_d$, the  center-outward quantile map or  quantile function $\mathbf{Q}_\pm$ of $\rm P$ 
 is defined as  the optimal transport pushing ${\rm U}_d$ forward to $\rm P$. Namely,  
\begin{equation}\label{defQ}
\mathbf{Q}_\pm(\mathbf{u}):=\nabla \psi(\mathbf{u})\quad \mbox{$\mathbf{u}$-a.e.~in } \mathbb{B}_d
\end{equation}
where $\nabla \psi$ is, in agreement with McCann's Theorem, the unique a.e.\ gradient of a convex function~$\psi$ with domain containing $\mathbb{B}_d$\footnote{We adhere to the usual convention of considering that a function defined on $A\subset \mathbb{R}^d$ is convex if it can be extended to a convex function on $\mathbb{R}^d$ with  values in  $\mathbb{R}\cup\{\infty\}$; the domain of the convex function is then redefined as  the set where it takes finite values.} such that~$\nabla \psi\sharp \mathrm{U}_d~\!=~\!{\rm P}$. Again, imposing, without loss of generality,\footnote{Indeed, two convex functions with a.e.~equal gradients on an open convex set are equal up to an additive constant 
(see, e.g., Lemma 2.1 in \cite{delBarrioLoubes}).} that~$\psi({\bf 0})=0$, the convex potential $\psi$ is uniquely defined  
%
 and a.e.~differentiable over $\mathbb{B}_d$. 
 We extend $\psi$ to a lsc convex function on  $\mathbb{R}^d$ with the standard procedure
of setting $\psi(\mathbf{u}):=\liminf_{\mathbf{z}\to\mathbf{u},|\psi(\mathbf{z})|<1} \psi(\mathbf{z})$ if~$|\mathbf{u}|=1$ and~$\psi(\mathbf{u}):=+\infty$ for $\mathbf{u}\notin \bar{\mathbb{B}}_d$  (see, e.g. (A.18) in \cite{Fi}). With this extension,  $\varphi$ is  the Legendre transform of $\psi$, that is, 
\begin{equation}\label{defPhi}
\varphi(\mathbf{x})=\psi^*(\mathbf{x}):=\sup_{\mathbf{u}\in \mathbb{B}_d} (\langle\mathbf{u},\mathbf{x} \rangle-\psi(\mathbf{u})),\quad \mathbf{x}\in \mathbb{R}^d.
\end{equation}
We observe that the domain of $\varphi$ is $\mathbb{R}^d$ and  that $\varphi$, being the sup of a 1-Lipschitz function, is \linebreak also~1~\!-~\!Lipschitz. In particular, 
 for almost every $\mathbf{x}\in\mathbb{R}^d$, $\varphi$ is differentiable with~$|\nabla \varphi(\mathbf{x})|\leq 1$ and, as a consequence (see, e.g., Corollary A.27 in \cite{Fi}),
\begin{equation}\label{eq:partial_bounded_for_1}
\partial \varphi(\mathbb{R}^d)\subseteq \bar{\mathbb{B}}_d;
\end{equation}
here, and throughout this paper,   $\bar{B}$ stands for the closure of a set $B$, $\partial{\varphi}({\bf x})$ for the subdifferential\footnote{Recall that the subdifferential of $\varphi$ at~$\bf x$ is the set of all ${\bf z}\in\mathbb{R}^d$ such that $\varphi({\bf y})-\varphi({\bf x})\geq \langle {\bf z}, {\bf y}-{\bf x} \rangle$ for all $\bf y$.}   of the convex function $\varphi$ at~$\bf x$, and $\partial \varphi(A):=\bigcup_{{\bf x}\in A} \partial \varphi({\bf x})$. Furthermore, Proposition 10  in \cite{McCann} (see also Remark 16) shows that, since $\rm P$ has a density, 
  $\nabla \psi(\nabla \varphi(\mathbf{x}))=\mathbf{x}$ for almost every $\mathbf{x}$ in the support of $\rm P$ and $\nabla \varphi(\nabla \psi(\mathbf{y}))=\mathbf{y}$ for almost every $\mathbf{y}\in \mathbb{B}_d$. In that sense, ${\bf Q}_\pm$ and ${\bf F}_\pm$ are the inverse of each other.
%
In this way, we have defined $\mathbf{F}_\pm (\mathbf{x})$ for almost every $\mathbf{x}\in\mathbb{R}^d$ and $\mathbf{Q}_\pm (\mathbf{u})$ for almost every~$\mathbf{u}\in\mathbb{B}_d$;  the definitions coincide with those in \cite{Chernouzhukovetal} or \cite{Hallin}
for $\mathbf{x}$ in the support of $\rm P$.

\subsection{Some regularity results for   Monge-Amp\`ere
equations}\label{Section22}

As announced in the Introduction, our approach to the regularity of the center-outward distribution and quantile functions is based on the classical regularity theory for  Monge-Amp\`ere
equations. We refer to \cite{Fi} for a comprehensive account of this theory, of which we present here a minimal account. 

Given an open set 
${\cal X}\subseteq\mathbb{R}^d$ and a (finite) convex function $\varphi:{\cal X}\to \mathbb{R}$,   denoting by $\ell_d$ he Lebesgue measure on $\mathbb{R}^d$, the Monge-Amp\`ere measure associated with $\varphi$ is  defined by
$$\mu_\varphi(E):=\ell_d\Big(\partial \varphi(E)\Big)$$
for every Borel set $E{\subseteq} \mathbb{R}^d$. It can be checked that $\mu_\varphi$ is indeed a locally finite Borel measure on $\cal X$. The crucial link between center-outward distribution functions and Monge-Amp\`ere measures can be summarized as follows. Assume $\rm P$ is a probability on $\mathcal{X}$ with  Lebesgue density $p$ and let $\varphi$ be a convex function {from $\cal X$ to $\mathbb{R}$}. Then, for every Borel set $A$, 
$${\rm Q}(A):=(\nabla \varphi \sharp {\rm P})(A)={\rm P}\big(\partial{\varphi^*}(A)\big)$$ 
where $\varphi^*$ is the Legendre transform of  $\varphi$. We recall that convexity of $\varphi$ implies that it is differentiable at almost every point in $\mathcal{X}$ (see, e.g., Theorem 25.4 in \cite{Rockafellar}) and, therefore, 
$$(\nabla \varphi \sharp {\rm P})(A)=P(\{\mathbf{x}: \nabla \varphi(\mathbf{x}) \in A\})=P(\{\mathbf{x}: \partial \varphi(\mathbf{x}) \subseteq A\}).$$
 This and the fact that $\mathbf{y}\in \partial \varphi(\mathbf{x})$ if and only if $\mathbf{x}\in \partial \varphi^*(\mathbf{y})$ yield the last equality above. {Hence, if}
 $\rm Q$ has a density $q$,   for every Borel set $A$,
$$\int_{\partial \varphi (A)}q({\bf y})d{\bf y}=\int_A p({\bf x})d{\bf x}$$ 
(see Lemma 4.6 in \cite{Vi}); if, moreover,   ${\rm Q}=\mathrm{U}_d$, 
\begin{equation}\label{eq:villani47}
\int_{\partial \varphi(A)} u_d({\bf y})d{\bf y} = \int_{A}p({\bf x})d{\bf x}.
\end{equation}
Observing  that
\begin{align*}
\mu_{ \varphi } (A)= \ell_d(\partial \varphi(A))
= \ell_d(\partial \varphi(A)\cap \mathbb{B}_d ),
\end{align*}
where the second equality follows from \eqref{eq:partial_bounded_for_1}, we obtain from \eqref{eq:villani47} that, for $A$ such that~$\ell_d(A)=0$, 
\begin{align*}\label{eq:abs_cont}
\mu_{\varphi} (A) &\leq {a_d}\int_{\partial \varphi(A)}u_d({\bf y})d{\bf y}
= {a_d}\int_{A}p({\bf x})d{\bf x}=0
\end{align*}
with $a_d$ as in \eqref{densityUd}. Thus, the Monge-Amp\`ere measure $\mu_{\varphi}$ is Lebesgue-absolutely continuous.
Since  the density of the absolutely continuous part of the Monge-Amp\`ere measure $\mu_{ \varphi }$ 
is given by~$\big({p({\bf x})}/{u_d(\nabla \varphi( {\bf x}))}\big)$ (see McCann \cite{Mc} or Theorem 4.8 in \cite{Vi}),
we conclude that, for every Borel set $A\subseteq \R^d$, 
\begin{equation}\label{eq:abs_cont_dens}
\mu_{ \varphi } (A)=\int_{A} \frac{p({\bf x})}{u_d(\nabla \varphi( {\bf x}))}d{\bf x}={a_d} \int_{A}p({\bf x})|\nabla \varphi( {\bf x})|^{d-1}d{\bf x}. 
\end{equation}


Let us focus now on the Monge-Amp\`ere measure $\mu_\psi$ associated (see Section~\ref{Section21}) with  ${\bf Q}_\pm$ and~$\psi$ (both defined over $\mathbb{B}_d$). 
Since $\nabla \psi$ pushes $\mathrm{U}_d$ forward to $\rm P$, we have that
$\nabla \psi({\bf y})\in {\cal X} $ ${\bf y}$-a.e.~in $\mathbb{B}_d$. By continuity (see Theorem 25.5 in \cite{Rockafellar}), 
 $\nabla \psi({\bf y})\in \bar{\mathcal X}$ for every point  ${\bf y}$ 
of differentiability of~$\psi$. Using again Corollary A.27 in \cite{Fi}, we conclude that $\partial \psi({\mathbb{B}}_d)$ is included in the convex hull~$\overline{\text{conv}{({\mathcal X})}}$ of $\mathcal X$.
Hence, if  $\mathcal X$ itself is convex,  we obtain that
\begin{equation}\label{eq:partial_bounded_for_1_phi2}
\partial \psi({\mathbb{B}}_d)
\subseteq
\overline{\mathcal{X}}  . 
\end{equation}
Analogous to \eqref{eq:villani47}, we have that
\begin{equation}\label{eq:villani4.72}
\int_{\partial \psi(B)}p({\bf x})d{\bf x} = \int_{B}u_d({\bf y})d{\bf y} \ \ \text{for every Borel set $B\subseteq\R^d$}. 
\end{equation}
Now, denoting by $r\,\mathbb{B}_d$ the open ball with radius $r$ centered at the origin,  let us assume  that the Borel set $B\subseteq r\,\mathbb{B}_d$, with $0<r<1$, has Lebesgue measure zero. Since~$\bar{B}\subseteq r\,\mathbb{B}_d$ is compact,~$\partial \psi(\bar{B})$ also is compact (see, e.g. Lemma A.22 in \cite{Fi}). Hence, there exists $R>0$ such that 
$$\partial \psi(B)\subseteq \partial \psi(\bar{B}) \subseteq R\,\mathbb{B}_d.$$ 

The following assumption, which requires the density $p$ of $\rm P$ to be bounded  and bounded away from 0 on compact subsets of the support, is absolutely essential (the same assumption is  also made by Figalli  in \cite{Fi2}). \medskip

\noindent {\bf Assumption A. } For every $R>0$,  there exist constants $0<\lambda_R\leq  \Lambda_R$ such that  
\begin{equation}\label{upperlowwer}
 \lambda_R  \leq p({\bf x}) \leq 
 \Lambda_R \quad\text{ for all ${\bf x}\in{\mathcal X}\cap R\,\mathbb{B}_d$}.\bigskip
\end{equation}

Since $\mathcal X$ is convex (hence $\ell_d(\bar{\mathcal X}-{\mathcal X})=0$), Assumption~A entails 
\begin{align*}\label{eq:abs_cont2}
\mu_{ \psi } (B) &\leq \frac{1}{\lambda_R}\int_{\partial \psi(B)}p({\bf x})d{\bf x}
= \frac{1}{\lambda_R}\int_{B}u_d({\bf y})d{\bf y}=0.
\end{align*}
Assuming convexity of $\mathcal X$ and \eqref{upperlowwer}, we conclude  that $\mu_\psi $ is absolutely continuous with respect to $\ell_d$ and, using  Theorem 4.8 in \cite{Vi} again, that,  for every Borel set $B\subset \mathbb{B}_d$, 
\begin{equation}\label{eq:abs_cont_dens2}
\mu_{ \psi } (B)=\int_{B} \frac{u_d({\bf y})}{p(\nabla \psi( {\bf y}))}d{\bf y}=\frac 1{a_d} \int_{B}\frac{1}{p(\nabla \psi({\bf y}))|{\bf y}|^{d-1}}d{\bf y}. 
\end{equation}
We summarize this discussion in the next proposition.
\begin{Proposition}\label{MongeAmpereMeasures}
Let  $\rm P$ be a probability measure with  density $p$ supported on the open set $\mathcal{X}\subseteq \mathbb{R}^d$. Denote by $\psi:\mathbb{B}_d\to \mathbb{R}$ the  convex, lower semicontinuous function satisfying  $\psi({\bf 0})=0$ and $\nabla \psi\sharp \mathrm{U}_d={\mathrm P}$ and let $\varphi:\mathbb{R}^d\to \mathbb{R}$ be defined as in \eqref{defPhi}. Then,
\begin{itemize}
\item[(i)] $\mu_\varphi$ is absolutely continuous with respect to $\ell_d$ and, for every Borel $A\subseteq \mathbb{R}^d$,
$$\mu_{ \varphi } (A)={a_d} \int_{A}p({\bf x})|\nabla \varphi( {\bf x})|^{d-1}d{\bf x};$$
\item[(ii)]  if, moreover, $\mathcal X$ is convex and $p$ satisfies Assumption~A, then $\mu_\psi$ is absolutely continuous with respect to $\ell_d$ and, 
for every Borel set $B\subseteq \mathbb{B}_d$,
$$\mu_{ \psi } (B)=\frac{1}{a_d}\int_{B}\frac{1}{p(\nabla \psi({\bf y}))|{\bf y}|^{d-1}}d{\bf y}.$$
\end{itemize}
\end{Proposition}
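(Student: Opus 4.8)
This proposition collects into a single statement the computations already carried out in the preceding paragraphs, so my plan is essentially to organize those into two self-contained arguments. For \emph{(i)} I would start from the facts recalled above: by McCann's theorem $\varphi=\psi^{*}$, $\nabla\varphi\sharp{\rm P}={\rm U}_{d}$, and $\nabla\varphi,\nabla\psi$ are mutually inverse up to null sets. The identity to work with is \eqref{eq:villani47}, $\int_{\partial\varphi(A)}u_{d}=\int_{A}p$ for every Borel $A$, coming from $(\nabla\varphi\sharp{\rm P})(A)={\rm P}(\partial\varphi^{*}(A))$ together with $\varphi^{*}=\psi$. Absolute continuity of $\mu_{\varphi}$ is then painless: if $\ell_{d}(A)=0$, then, using $\partial\varphi(A)\subseteq\bar{\mathbb B}_{d}$ from \eqref{eq:partial_bounded_for_1} and the elementary bound $u_{d}\ge 1/a_{d}$ on $\mathbb B_{d}$,
\[
\mu_{\varphi}(A)=\ell_{d}\bigl(\partial\varphi(A)\cap\mathbb B_{d}\bigr)\le a_{d}\int_{\partial\varphi(A)}u_{d}=a_{d}\int_{A}p=0 ,
\]
so here the singularity of $u_{d}$ is harmless --- what matters is that $u_{d}$ is bounded \emph{below}. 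Having $\mu_{\varphi}\ll\ell_{d}$, I would then invoke the standard identification of the density of the absolutely continuous part of the Monge-Amp\`ere measure of an optimal map (McCann \cite{Mc}; Theorem 4.8 in \cite{Vi}) as $p(\mathbf x)/u_{d}(\nabla\varphi(\mathbf x))$, and substitute \eqref{densityUd} to obtain $a_{d}\,p(\mathbf x)|\nabla\varphi(\mathbf x)|^{d-1}$, which is \eqref{eq:abs_cont_dens}. (One can also bypass that reference: the change-of-variables identity $\int_{\partial\varphi(A)}h(\mathbf y)u_{d}(\mathbf y)\,d\mathbf y=\int_{A}h(\nabla\varphi(\mathbf x))p(\mathbf x)\,d\mathbf x$ for nonnegative Borel $h$, applied with $h(\mathbf y)=a_{d}|\mathbf y|^{d-1}$ and a.e.\ invertibility to identify $\{\mathbf x:\nabla\varphi(\mathbf x)\in\partial\varphi(A)\}$ with $A$ modulo a null set, turns the left-hand side into $\ell_{d}(\partial\varphi(A)\cap\mathbb B_{d})=\mu_{\varphi}(A)$.)

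For \emph{(ii)} I would run the mirror-image argument, swapping the roles of ${\rm P}$ and ${\rm U}_{d}$: now $\nabla\psi\sharp{\rm U}_{d}={\rm P}$ gives \eqref{eq:villani4.72}, $\int_{\partial\psi(B)}p=\int_{B}u_{d}$. The only genuinely delicate point, and the place where the extra hypotheses enter, is absolute continuity of $\mu_{\psi}$: the reference density controlling this transport is now $p$, which Assumption~A bounds below only on bounded sets, while $\partial\psi(B)$ may fail to be bounded as $B$ approaches $\partial\mathbb B_{d}$. So I would first take $B\subseteq r\,\mathbb B_{d}$ with $0<r<1$ and $\ell_{d}(B)=0$; then $\bar B$ is compact in $\mathbb B_{d}$, hence $\partial\psi(\bar B)$ is compact (Lemma A.22 in \cite{Fi}), so $\partial\psi(B)\subseteq R\,\mathbb B_{d}$ for some $R>0$. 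Since convexity of $\mathcal X$ yields $\partial\psi(\mathbb B_{d})\subseteq\bar{\mathcal X}$ by \eqref{eq:partial_bounded_for_1_phi2} and $\ell_{d}(\bar{\mathcal X}\setminus\mathcal X)=0$, Assumption~A gives $\int_{\partial\psi(B)}p\ge\lambda_{R}\,\ell_{d}(\partial\psi(B))$, whence
\[
\mu_{\psi}(B)=\ell_{d}\bigl(\partial\psi(B)\bigr)\le\frac{1}{\lambda_{R}}\int_{\partial\psi(B)}p=\frac{1}{\lambda_{R}}\int_{B}u_{d}=0 ,
\]
the last equality because $u_{d}\in L^{1}(\mathbb B_{d})$. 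An arbitrary $\ell_{d}$-null $B\subseteq\mathbb B_{d}$ is the increasing union of the sets $B\cap(1-\tfrac1n)\mathbb B_{d}$, so $\mu_{\psi}(B)=0$ by continuity of $\mu_{\psi}$ from below, which proves $\mu_{\psi}\ll\ell_{d}$. Then, exactly as in \emph{(i)}, Theorem 4.8 in \cite{Vi} identifies the density as $u_{d}(\mathbf y)/p(\nabla\psi(\mathbf y))=\bigl(a_{d}|\mathbf y|^{d-1}p(\nabla\psi(\mathbf y))\bigr)^{-1}$, i.e.\ \eqref{eq:abs_cont_dens2}.

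The hard part is thus concentrated in \emph{(ii)}: one cannot control $\int_{B}u_{d}$ crudely, and, more to the point, the density against which the transport ${\rm U}_{d}\to{\rm P}$ must be compared is $p$ itself, which Assumption~A pins down only on compacta. Consequently the exhaustion of $\mathbb B_{d}$ by the balls $r\,\mathbb B_{d}$, combined with the compactness of $\partial\psi(\bar B)$ and the inclusion $\partial\psi(\mathbb B_{d})\subseteq\bar{\mathcal X}$, is what makes the argument go through, and it is precisely there that convexity of $\mathcal X$ and the lower bound $\lambda_{R}>0$ become indispensable --- in contrast with \emph{(i)}, where no assumption on $p$ beyond measurability is needed.
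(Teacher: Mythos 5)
Your argument is correct and coincides with the paper's own derivation: identity \eqref{eq:villani47} (resp.\ \eqref{eq:villani4.72}) plus the bound $u_d\geq 1/a_d$ on $\mathbb{B}_d$ with \eqref{eq:partial_bounded_for_1} for part \emph{(i)}, and the compactness of $\partial\psi(\bar B)$ for $B\subseteq r\,\mathbb{B}_d$ together with \eqref{eq:partial_bounded_for_1_phi2}, $\ell_d(\bar{\mathcal X}\setminus\mathcal X)=0$, and Assumption~A for part \emph{(ii)}, followed in both cases by the identification of the density via McCann/Theorem~4.8 in \cite{Vi}. Your only additions --- the explicit exhaustion of $\mathbb{B}_d$ by the balls $(1-\tfrac1n)\mathbb{B}_d$ and the parenthetical change-of-variables bypass of Theorem~4.8 --- merely make explicit steps the paper leaves implicit.
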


Next, let us  show   that,  for well-behaved probability measures $\rm P$ (those with convex support and  density $p$ satisfying Assumption~A),
the center-outward distribution function ${\bf F}_\pm$ cannot map points in the interior of the support of $\rm P$ to extremal points of the unit ball.
\begin{Lemma}\label{lem:boundary}
Let $\rm P$ be a probability measure with  density $p$ supported on the convex open set ${\mathcal X}\subseteq \mathbb{R}^d$ and such that 
Assumption~A holds. Then $(\partial \varphi )(\mathcal{X})\cap \mathbb{S}_{d-1}=\emptyset$, where $\mathbb{S}_{d-1}=\bar{\mathbb{B}}_d\setminus\mathbb{B}_d$.
\end{Lemma}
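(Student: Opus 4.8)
The plan is to argue by contradiction: suppose there exists ${\bf x}_0\in\mathcal{X}$ with ${\bf y}_0\in\partial\varphi({\bf x}_0)\cap\mathbb{S}_{d-1}$, so $|{\bf y}_0|=1$. The idea is that a boundary value of $\nabla\varphi$ forces a whole half-line (in the direction ${\bf y}_0$) into the subdifferential $\partial\varphi({\bf x}_0)$ once we pass to the Legendre-transform side, and this produces a set of positive Lebesgue measure carried by a single point, contradicting absolute continuity of $\mu_\psi$ together with the explicit density formula in Proposition~\ref{MongeAmpereMeasures}(ii). First I would recall the duality ${\bf y}_0\in\partial\varphi({\bf x}_0)\iff {\bf x}_0\in\partial\psi^{**}({\bf y}_0)=\partial\psi({\bf y}_0)$ (using lower semicontinuity and convexity of $\psi$, so $\psi^{**}=\psi$ on $\bar{\mathbb{B}}_d$ and $\varphi=\psi^*$). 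Thus ${\bf x}_0\in\partial\psi({\bf y}_0)$ with $|{\bf y}_0|=1$, i.e.\ $\psi$ has a nonempty subdifferential at a boundary point of $\mathbb{B}_d$.

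Next I would exploit convexity of $\psi$ on $\bar{\mathbb{B}}_d$ near ${\bf y}_0$: since ${\bf x}_0$ is in the interior of $\mathcal{X}$ and (by \eqref{eq:partial_bounded_for_1_phi2}) $\partial\psi(\mathbb{B}_d)\subseteq\bar{\mathcal{X}}$, the point ${\bf x}_0$ lies in the interior of the range of $\nabla\psi$ over a neighborhood of ${\bf y}_0$ in $\bar{\mathbb{B}}_d$. Concretely, since ${\bf x}_0\in\mathcal{X}$ open, there is $\rho>0$ with $\overline{{\bf x}_0+\rho\mathbb{B}_d}\subseteq\mathcal{X}$. The supporting-hyperplane inequality $\psi({\bf u})\ge\psi({\bf y}_0)+\langle{\bf x}_0,{\bf u}-{\bf y}_0\rangle$ for all ${\bf u}\in\bar{\mathbb{B}}_d$, combined with the fact that $\psi$ has \emph{some} subgradient at every point of a small relative neighborhood $N$ of ${\bf y}_0$ in $\bar{\mathbb{B}}_d$ (finiteness of $\psi$ up to the boundary), lets me show: for every ${\bf z}$ in a small ball $B({\bf x}_0,\delta)$ there is ${\bf u}\in N$ with ${\bf z}\in\partial\psi({\bf u})$. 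The cleanest route is to consider the modified potential $\tilde\psi({\bf u})=\psi({\bf u})-\langle{\bf x}_0,{\bf u}\rangle$, which attains its minimum over $\bar{\mathbb{B}}_d$ at ${\bf y}_0\in\mathbb{S}_{d-1}$; then for ${\bf z}$ with $|{\bf z}-{\bf x}_0|$ small, $\tilde\psi({\bf u})-\langle{\bf z}-{\bf x}_0,{\bf u}\rangle$ still attains its minimum over $\bar{\mathbb{B}}_d$ at a point near ${\bf y}_0$, hence on $\mathbb{S}_{d-1}$ or at an interior point mapping to ${\bf z}$; a gradient/normal-cone computation shows the minimizer's subdifferential contains ${\bf z}$. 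This shows $\mathbb{S}_{d-1}\cap N$ (or more precisely an arc of it) has $\partial\psi$-image of positive Lebesgue measure, i.e.\ $\mu_\psi(N\cap\mathbb{S}_{d-1})>0$.

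Finally I would derive the contradiction. The set $\mathbb{S}_{d-1}$ has Lebesgue measure zero in $\mathbb{R}^d$, and by Proposition~\ref{MongeAmpereMeasures}(ii) the measure $\mu_\psi$ is absolutely continuous with respect to $\ell_d$ on $\mathbb{B}_d$; but $N\cap\mathbb{S}_{d-1}$ is \emph{not} inside the open ball, so I must be slightly careful and instead work with the relatively open piece $N\cap\mathbb{B}_d$ shrinking toward ${\bf y}_0$: the positive mass I constructed actually lands on a neighborhood that intersects $\mathbb{B}_d$ in a set of arbitrarily small Lebesgue measure, while $\int_{N\cap\mathbb{B}_d}\frac{1}{p(\nabla\psi({\bf y}))|{\bf y}|^{d-1}}d{\bf y}\le\frac{1}{\lambda_R\,(1-\epsilon)^{d-1}}\ell_d(N\cap\mathbb{B}_d)\to 0$ as $N\downarrow\{{\bf y}_0\}$ (here $R$ is chosen so $\overline{\partial\psi(N)}\subseteq R\mathbb{B}_d$, which is possible since $\partial\psi$ of a compact set is compact). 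Since the $\mu_\psi$-mass of these shrinking neighborhoods is bounded below by the positive constant $\ell_d(B({\bf x}_0,\delta))>0$ produced in the previous step (each such neighborhood's $\partial\psi$-image still contains $B({\bf x}_0,\delta)$), we reach $0<\ell_d(B({\bf x}_0,\delta))\le 0$, a contradiction.

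The main obstacle is the second step: making rigorous the claim that a boundary subgradient of $\psi$ at ${\bf y}_0$ forces a \emph{full-dimensional} chunk of points ${\bf z}$ near ${\bf x}_0$ to each have a preimage under $\partial\psi$ lying in an arbitrarily small boundary neighborhood of ${\bf y}_0$. This is a stability statement for argmin of the perturbed potentials $\tilde\psi-\langle{\bf z}-{\bf x}_0,\cdot\rangle$ over the compact convex set $\bar{\mathbb{B}}_d$, and it hinges on ${\bf x}_0$ being an \emph{interior} point of $\mathcal{X}$ (so that the perturbation is genuinely $d$-dimensional and cannot be absorbed). Once this localization is in hand, the measure-theoretic contradiction via Proposition~\ref{MongeAmpereMeasures}(ii) and the vanishing of $\ell_d$ on the sphere is routine.
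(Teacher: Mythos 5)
The crux of your plan---step 2, the claim that for every small relative neighborhood $N$ of ${\bf y}_0$ in $\bar{\mathbb{B}}_d$ there is a $\delta>0$ with $B({\bf x}_0,\delta)\subseteq\partial\psi(N)$---is a genuine gap: it does not follow from the ingredients you invoke (supporting-hyperplane inequality, finiteness of $\psi$ up to the boundary, ${\bf x}_0$ interior to $\mathcal X$, $\partial\psi(\mathbb{B}_d)\subseteq\bar{\mathcal X}$), and as a statement of convex analysis it is simply false. Take $d=1$, $\psi\equiv 0$ on $[-1,1]$ (extended by $+\infty$ outside), ${\bf y}_0=1$, ${\bf x}_0=0$, $\mathcal X=(-2,2)$: then ${\bf x}_0\in\partial\psi({\bf y}_0)=[0,\infty)$ and every fact you use holds, yet $\partial\psi(N)=[0,\infty)$ for every relative neighborhood $N$ of $1$, so no ${\bf z}<0$ has a preimage near ${\bf y}_0$. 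The perturbation/argmin argument you sketch only yields upper semicontinuity of ${\bf z}\mapsto\operatorname{argmin}(\psi-\langle{\bf z},\cdot\rangle)=\partial\varphi({\bf z})$: minimizers for ${\bf z}$ near ${\bf x}_0$ accumulate on the (possibly large) set $\partial\varphi({\bf x}_0)$, not on your chosen point ${\bf y}_0$; points on the ``wrong side'' of ${\bf x}_0$ may have all their subgradients far from ${\bf y}_0$. You cannot rescue the inclusion by appealing to the transport structure without circularity, since excluding precisely this configuration is what the lemma asserts. A secondary, fixable issue: even granting step 2, your minimizers naturally sit on $\mathbb{S}_{d-1}$ (possibly at ${\bf y}_0$ itself), while Proposition~\ref{MongeAmpereMeasures}(ii) controls $\mu_\psi$ only on subsets of the open ball, so the lower bound $\mu_\psi(N\cap\mathbb{B}_d)\geq\ell_d(B({\bf x}_0,\delta))$ is not justified; the accounting should run through the transport identity \eqref{eq:villani4.72}, valid for all Borel sets, rather than through $\mu_\psi$.

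For comparison, the paper localizes in the opposite direction and, crucially, quantitatively. Taking ${\bf x}_0={\bf 0}$, monotonicity of $\partial\varphi$ shows that the cone $\mathcal{C}_{\epsilon,\vartheta}$ of points with norm at most $\epsilon$ and direction within angle $\vartheta$ of ${\bf y}_0$ is mapped by $\partial\varphi$ into the thin cap $\mathcal{D}_{\vartheta}=\{{\bf b}\in\mathbb{B}_d:\langle{\bf y}_0-{\bf b},{\bf y}_0\rangle\leq 2\vartheta|{\bf y}_0-{\bf b}|\}$ near the tangent hyperplane at ${\bf y}_0$; the transport equation \eqref{eq:villani47}, the lower bound $\lambda_\epsilon$ on $p$ in the cone, and the upper bound on $u_d$ away from the origin give $\lambda_\epsilon\,\ell_d(\mathcal{C}_{\epsilon,\vartheta})\leq (2/a_d)\,\ell_d(\mathcal{D}_\vartheta)$, and the contradiction comes from the rate comparison $\ell_d(\mathcal{C}_{\epsilon,\vartheta})\approx\epsilon^d\vartheta^{d-1}$ versus $\ell_d(\mathcal{D}_\vartheta)\approx\vartheta^{d+1}$ as $\vartheta\to0$, i.e., from the curvature of the sphere. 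This rate information is indispensable: for each fixed $\vartheta$ both sets have positive measure, so a soft stability argument without rates cannot close the contradiction. To repair your outline you would need a quantitative, one-sided (cone-to-cap) substitute for step 2 of exactly this type.
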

\begin{proof}
Assume 
 that there exists $x\in \mathcal{X}$ such that   $|{\bf y}|=1$ for some ${\bf y}\in \partial \varphi ({\bf x})$.  
Without loss of generality, we can assume ${\bf x}={\bf 0}$. Since $\mathcal X$ is open,   there exists $\epsilon>0$ such that~$\epsilon\overline{\mathbb{B}}_{d}\subset \mathcal{X}$. For small~$\theta>0$,  consider the sets
\begin{align*}
	\mathcal{C}_{\epsilon,\vartheta }&:=\Big\{{\bf x}\in \R^d : \Big\vert \frac{{\bf x}}{| {\bf x}|}-{\bf y}\Big\vert\leq \sin{\vartheta} , |{\bf x}|\leq\epsilon\Big\} \\ 
		\mathcal{D}_{\vartheta }&:=\{{\bf b}\in \mathbb{B}_d: \ \langle {\bf y}-{\bf b}, {\bf y}\rangle\leq 2\vartheta|{\bf y}-{\bf b}|\}.
\end{align*}
Now, if ${\bf a}\in \mathcal{C}_{\epsilon,\vartheta }$ and ${\bf b} \in \partial \varphi ({\bf a})$, the monotonicity of $\partial \varphi$
implies that $\langle {\bf y} -{\bf b},{\bf a} \rangle\leq 0$. Hence, 
$$\langle {\bf y}-{\bf b}, {\bf y}\rangle=\langle {\bf y}-{\bf b}, {\bf y}-\frac{{\bf a}}{|{\bf a}|}\rangle+\langle {\bf y}-{\bf b},\frac{{\bf a}}{|{\bf a}|}\rangle
\leq |{\bf y}-{\bf b} |\sin (\vartheta)\leq |{\bf y}-{\bf b} |2\vartheta.$$
This shows that $\partial \varphi(\mathcal{C}_{\epsilon,\vartheta })\subseteq \mathcal{D}_{\vartheta }$.
But the density $p$, inside $\mathcal{C}_{\epsilon,\vartheta }$, is bounded from below  by $\lambda_{\epsilon}$  and the density $u_d$ is bounded from above by $2/a_d$ inside $\mathcal{D}_{\vartheta }$ for $\vartheta\ll 1$: then, in view of the transport equation~\eqref{eq:villani47}, we have
\begin{align*}
	\frac 2{a_d}\ell_d\big(\mathcal{D}_{\vartheta } \big)\geq \int_{\mathcal{D}_{\vartheta }}u_d({\bf b}) d{\bf b}\geq  \int_{\partial \varphi(\mathcal{C}_{\epsilon,\vartheta })}u_d({\bf b}) d{\bf b}= \int_{\mathcal{C}_{\epsilon,\vartheta } }p({\bf x}) d{\bf x}\geq \lambda_{{\epsilon}}\ell_d\big(\mathcal{C}_{\epsilon,\vartheta }\big).
\end{align*}
This, however, cannot hold true since $\ell_d\big(\mathcal{C}_{\epsilon,\vartheta }\big)\approx \epsilon ^d \vartheta^{d-1}$ and $\ell_d\big(\mathcal{D}_{\vartheta }\big)\approx \vartheta^{d+1}$ as $\theta\to 0$. The claim follows. 
\end{proof}

{
We now proceed   to provide sufficient conditions under which the center-outward quantile function~${\bf Q}_\pm$ is continuous at every point in the
open unit ball (except, possibly, at the origin).
It is well known that differentiability of a lower semicontinuous convex function $\psi$ (which entails continuity of its gradient) is equivalent to strict convexity of its convex conjugate (see Theorem 26.3 in \cite{Rockafellar}). As announced, the techniques we are using here are in the spirit of those developed by Caffarelli in \cite{Ca1}, \cite{Ca2} or Figalli in \cite{Fi}, \cite{Fi2}, which in turn  largely rely on the fact that, under some control for the Monge-Amp\`ere measure, the intersection between the graph and supporting hyperplanes of $\psi$ either consists of a single point or has an extreme point (see Theorem 4.10 in \cite{Fi}). A central result in Caffarelli's regularity theory (see Corollary 4.21 in \cite{Fi}) is that a strictly
convex function $\psi$ on an open set $\Omega$ for which there exist constants $0<\lambda<\Lambda$ such that 
\begin{equation}\label{MGbounds}
\lambda\ell_d(A)\leq\mu_\psi(A)\leq \Lambda\ell_d(A)
\end{equation}
for every Borel set $A\subseteq \Omega$ is automatically of class $\mathcal{C}_{\mbox{\scriptsize loc}}^{1,\alpha}$ for some $\alpha>0$ that depends only on $\lambda, \Lambda$, and $d$ (condition (\ref{MGbounds}) in the sequel  will be summarized, with a slight abuse of notation, as $\lambda d\mathbf{x}\leq \mu_{\psi}\leq \Lambda d\mathbf{x}$).
The fact that 
$\mathrm{U}_d$ for~$d\geq 2$ has an unbounded density  adds some complication to the particular problem here, though. On the other hand, the density 
$u_d$ is bounded away from 0, which allows to control the growth of the Monge-Amp\`ere measure, as we show next.
\begin{Lemma}\label{lem:bounded}
If $\rm P$ satisfies the assumptions in Proposition \ref{MongeAmpereMeasures}{\it (ii)}, denoting by  $M$ a compact subset of~$\mathbb{B}_d$,  there exist constants $\alpha_M$ and $A_M$ such that, 
for every Borel set $A\subseteq M$,
\begin{align}
	\alpha_M\ell_d(A)\leq \mu_{\psi}(A)\leq A_M(\ell_d(A))^{1/d}.
\label{lem23}\end{align}
\end{Lemma}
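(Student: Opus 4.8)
The plan is to use the explicit formula for $\mu_\psi$ from Proposition~\ref{MongeAmpereMeasures}(ii), namely
$$\mu_\psi(A)=\frac{1}{a_d}\int_A \frac{1}{p(\nabla\psi(\mathbf{y}))\,|\mathbf{y}|^{d-1}}\,d\mathbf{y},$$
and to bound the integrand from above and below on the compact set $M\subseteq\mathbb{B}_d$. Since $M$ is compact and contained in the open ball, there is $r<1$ with $M\subseteq r\,\mathbb{B}_d$, so $|\mathbf{y}|\leq r$ gives $|\mathbf{y}|^{d-1}\leq r^{d-1}$, while $|\mathbf{y}|^{d-1}$ may approach $0$ if $\mathbf{0}\in M$ — this is the source of the unboundedness and the reason the upper bound in \eqref{lem23} is not linear. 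For the lower bound in \eqref{lem23} I would argue as follows: by \eqref{eq:partial_bounded_for_1_phi2} we know $\partial\psi(\bar M)\subseteq\overline{\mathcal X}$, and since $\bar M$ is compact $\partial\psi(\bar M)$ is compact (Lemma A.22 in \cite{Fi}), hence contained in some $R\,\mathbb{B}_d$; therefore $\nabla\psi(\mathbf{y})\in\mathcal X\cap R\,\mathbb{B}_d$ for a.e.\ $\mathbf{y}\in M$, and Assumption~A gives $p(\nabla\psi(\mathbf{y}))\leq\Lambda_R$. Combined with $|\mathbf{y}|^{d-1}\leq r^{d-1}$ this yields the pointwise bound $1/(p(\nabla\psi(\mathbf{y}))|\mathbf{y}|^{d-1})\geq 1/(\Lambda_R r^{d-1})$ on $M$, so we may take $\alpha_M=1/(a_d\Lambda_R r^{d-1})$.

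For the upper bound, Assumption~A (applied again with the same $R$) gives $p(\nabla\psi(\mathbf{y}))\geq\lambda_R>0$ a.e.\ on $M$, so
$$\mu_\psi(A)\leq\frac{1}{a_d\lambda_R}\int_A\frac{d\mathbf{y}}{|\mathbf{y}|^{d-1}}.$$
The remaining task is to bound $\int_A |\mathbf{y}|^{-(d-1)}\,d\mathbf{y}$ by a constant times $(\ell_d(A))^{1/d}$. This is a rearrangement-type estimate: among all sets $A$ of a given Lebesgue measure, the integral $\int_A|\mathbf{y}|^{-(d-1)}\,d\mathbf{y}$ is maximized by a ball centered at the origin (because $|\mathbf{y}|^{-(d-1)}$ is radially decreasing). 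Concretely, if $\ell_d(A)=\ell_d(\rho\,\mathbb{B}_d)$, i.e.\ $\rho=(\ell_d(A)/\omega_d)^{1/d}$ with $\omega_d=\ell_d(\mathbb{B}_d)$, then $\int_A|\mathbf{y}|^{-(d-1)}\,d\mathbf{y}\leq\int_{\rho\mathbb{B}_d}|\mathbf{y}|^{-(d-1)}\,d\mathbf{y}=a_d\int_0^\rho dt=a_d\,\rho=a_d\,\omega_d^{-1/d}(\ell_d(A))^{1/d}$, using polar coordinates and the fact that the singularity $t^{-(d-1)}\cdot t^{d-1}=1$ is integrable. This gives the upper bound in \eqref{lem23} with $A_M=\omega_d^{-1/d}/\lambda_R$ (the $a_d$ cancels).

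To make the rearrangement step rigorous without invoking heavy machinery, I would prove the inequality $\int_A f\leq\int_{A^\star}f$ directly, where $A^\star=\rho\,\mathbb{B}_d$ is the centered ball with $\ell_d(A^\star)=\ell_d(A)$ and $f(\mathbf{y})=|\mathbf{y}|^{-(d-1)}$: writing $f$ via its layer-cake representation $f(\mathbf{y})=\int_0^\infty I[f(\mathbf{y})>s]\,ds$ and noting that $\{f>s\}$ is itself a centered ball, one has $\int_A f=\int_0^\infty\ell_d(A\cap\{f>s\})\,ds$ and $\int_{A^\star}f=\int_0^\infty\ell_d(A^\star\cap\{f>s\})\,ds=\int_0^\infty\min(\ell_d(A),\ell_d(\{f>s\}))\,ds$; since $\ell_d(A\cap\{f>s\})\leq\min(\ell_d(A),\ell_d(\{f>s\}))$ termwise, the inequality follows. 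The main obstacle is really just this rearrangement estimate — everything else is a direct application of Proposition~\ref{MongeAmpereMeasures}(ii), \eqref{eq:partial_bounded_for_1_phi2}, compactness of $\partial\psi(\bar M)$, and Assumption~A; one should be slightly careful that the constants $\lambda_R,\Lambda_R$ depend on $M$ only through the radius $R$ of a ball containing $\partial\psi(\bar M)$, which is why they are legitimately absorbed into $\alpha_M$ and $A_M$.
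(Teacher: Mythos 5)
Your proposal is correct and follows essentially the same route as the paper's proof: compactness of $\partial\psi(M)$ gives the radius $R$ for Assumption~A, the lower bound comes from $|\mathbf{y}|^{d-1}\le 1$ together with $p\le\Lambda_R$, and the upper bound comes from $p\ge\lambda_R$ plus the fact that the centered ball of equal measure maximizes $\int_A|\mathbf{y}|^{1-d}d\mathbf{y}$, evaluated in polar/co-area form as $a_d\rho$. Your layer-cake justification of the rearrangement step simply makes explicit what the paper asserts without proof, so there is no substantive difference.
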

\begin{proof}
The compactness of $M$ entails that of $\partial \psi(M)$;  in particular,  $\partial \psi(M)\subseteq R\,\mathbb{B}_d$ for some $R>0$.
Hence, using Proposition \ref{MongeAmpereMeasures}{\it (ii)} and taking $\lambda_R, \Lambda_R\in \R$ as in Assumption~A, we obtain 
\begin{align*}
	\mu_{ \psi } (A)=a_d \int_{A}\frac{1}{p(\nabla \psi({\bf y}))|{\bf y}|^{d-1}}d{\bf y}\geq \frac{a_d}{\Lambda_R}\ell_d(A) . 
\end{align*}
For the upper bound in \eqref{lem23}, note that the ball ${(\ell_d(A)/c_d)^{1/d}}B_d$ (where {$c_d=\pi^{d/2}/\Gamma(1+d/2)$} denotes the volume of the $d$-dimensional
unit ball) maximizes $\int_B |{\bf y}|^{1-d} d{\bf y}$ among all subsets of $\mathbb{B}_d$ with  Lebesgue measure   $\ell_d(A)$.
On the other hand, by the co-area formula (see, e.g., Proposition 1, p. 118 in \cite{EvansGariepy}), 
\begin{equation}
\int_{r\mathbb{B}_d} |{\bf y}|^{1-d} d{\bf y}=\int_0^r \left[ \int_{\partial s\mathbb{B}_d} |{\bf y}|^{1-d} d
\mathcal{H}^{d-1}({\bf y})\right] ds=\int_0^r a_d\, ds=a_d\, r\label{coarea}
\end{equation}
where $\mathcal{H}^{d-1}$ denotes the $(d-1)$-dimensional Hausdorff measure. 
Combining \eqref{coarea}  with Proposition~\ref{MongeAmpereMeasures}{\it (ii)}, we conclude that
$$\mu_{\psi}(A)\leq \frac{1}{\lambda_R\, a_d} \int_A |{\bf y}|^{1-d}d{\bf y}\leq \frac{1}{\lambda_R\, a_d^{1/d}} (\ell_d(A))^{1/d}.\vspace{-11mm}$$
\end{proof}
\medskip


Note that 
the lower bound in Lemma~\ref{lem:bounded} remains valid for a compact subset $M$ of $\bar{\mathbb{B}}_d$ provided that~$\partial{\psi}(M)$ is bounded: indeed, 
 that lower bound only requires the upper bound from Assumption~A. A similar conclusion holds for the upper bound. Additionally, if the density $p$ is uniformly bounded, the lower bound holds for any subset of $\bar{\mathbb{B}}_d$.

\subsection{Main result}\label{Section23}

We are ready now for the main result of this note. 
Our proof follows the lines of \cite{Fi}, \cite{Fi2}, and \cite{Fi3}, but we cover cases in which the range of the extension to the whole space of the  center-outward quantile function is not necessarily equal to $\R^d$. As in the last reference, we have to handle carefully the fact that $\mathcal X$ is not {necessarily} bounded and use a ``minimal'' extension of the quantile function potential, namely,
\begin{equation}\label{eq:newdef2}
\tilde{\psi}({\bf z}):=\sup_{{\bf b}\in \mathbb{B}_d, \ {\bf y}\in \partial \psi({\bf b})}\{ \left\langle {\bf y},{\bf z}-{\bf b}\right\rangle+\psi({\bf b})\}, \ \ {\bf z} \in \R^d.
\end{equation}
Obviously, $\tilde{\psi}$ is still a lower semicontinuous convex function and $\tilde{\psi}({\bf z})$ coincides with~$\psi({\bf z})$ for~${\bf z}\in\mathbb{B}_d$. Since $\mathbf{Q}_\pm({\bf z}):=\nabla \psi({\bf z})\in \mathcal{X}$ for every 
differentiability point ${\bf z}$ of $\psi$ in $\mathbb{B}_d$, we see  (using, once more, Corollary A.27 in \cite{Fi}) that, provided that $\mathcal X$ is convex, $\partial \psi(\mathbb{B}_d)\subseteq \bar{\mathcal X}$.
The ``minimality'' of the extension \eqref{eq:newdef2} refers to the fact that 
$\partial \tilde{\psi}(\R^d)\subset \bar{\mathcal X}$, 
as can be checked from a simple application of the Hahn-Banach separation theorem. Of course, the values of $\tilde{\psi}$ outside $\mathbb{B}_d$ are not relevant for the study of its differentiability inside $\mathbb{B}_d$, but the use of $\tilde{\psi}$ will be useful in the next proof. We note also that the discussion leading to Proposition \ref{MongeAmpereMeasures} can be reproduced with $\tilde{\psi}$ substituted for $\psi$ to conclude that $\mu_{\tilde{\psi}} $ is absolutely continuous  with respect to the Lebesgue measure and that, for every Borel set $B\subseteq \R^d$, 
\begin{equation}\label{eq:abs_cont_dens3}
\mu_{\tilde{\psi}} (B)=\int_{B\cap {\mathcal X}} \frac{u_d({\bf y})}{p(\nabla \tilde{\psi}( {\bf y}))}d{\bf y} .
\end{equation}
Finally,  observe that $\mu_{\tilde{\psi}}$ in concentrated on $\mathbb{B}_d$, that is, if $B\subseteq \R^d\setminus \mathbb{B}_d$, then $\mu_{ \tilde{\psi} } (B)=0$, 
see Theorem~4.8 in \cite{Vi} or \cite{Fi3} for further details.\medskip

The main result of this note follows from the following crucial lemma.

\begin{Lemma}\label{lem:convex} Under the assumptions of Theorem~\ref{th:homeomorphism}, 
$\tilde{\psi}$ is  strictly convex   on $\mathbb{B}_d$. 
\end{Lemma}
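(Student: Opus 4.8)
The plan is to argue by contradiction in the classical Caffarelli style, adapted to the unbounded reference density. Suppose $\tilde\psi$ is not strictly convex on $\mathbb{B}_d$. Then there is a supporting hyperplane $\ell$ to the graph of $\tilde\psi$ whose contact set $\Sigma := \{ {\bf z} : \tilde\psi({\bf z}) = \ell({\bf z})\}$ contains more than one point and meets $\mathbb{B}_d$. Normalizing (subtracting the affine function $\ell$, which does not change the Monge-Amp\`ere measure nor the strict-convexity question), we may assume $\ell \equiv 0$, so $\tilde\psi \ge 0$ and $\Sigma = \{\tilde\psi = 0\}$ is a convex set with $\dim \Sigma \ge 1$. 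The key structural input from Caffarelli's theory, available because $\mu_{\tilde\psi}$ is absolutely continuous (shown in the excerpt, see \eqref{eq:abs_cont_dens3}) and locally bounded above by a multiple of Lebesgue measure on compact subsets of $\mathbb{B}_d$ (Lemma~\ref{lem:bounded}), is Theorem~4.10 in \cite{Fi}: a convex function whose Monge-Amp\`ere measure does not charge ``small'' sets cannot have a contact set that is a bounded convex set with an exposed extreme point in the interior of the domain. So the contact set $\Sigma$, if it has an extreme point in $\mathbb{B}_d$, must be unbounded, i.e. it is a ray or a line or a higher-dimensional unbounded convex set escaping to infinity, or else $\Sigma$ has an exposed point on $\partial\mathbb{B}_d$.

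Next I would dispose of the case where $\Sigma$ escapes to infinity. Here the ``minimality'' of the extension $\tilde\psi$ is essential: since $\partial\tilde\psi(\mathbb{R}^d) \subseteq \bar{\mathcal X}$ and, by Lemma~\ref{lem:boundary}, $\partial\varphi(\mathcal{X}) \cap \mathbb{S}_{d-1} = \emptyset$ (equivalently, $\nabla\psi$ does not take the value ${\bf 0}$ except on a controlled set, and the gradient on an unbounded contact ray would have to be constant and hence equal to a fixed ${\bf y}_0$), an unbounded contact set would force $\tilde\psi$ to be affine along an entire half-line contained in (or accumulating at the boundary of) $\mathbb{B}_d$; but a half-line meeting $\mathbb{B}_d$ must exit $\bar{\mathbb{B}}_d$, so the contact set, being convex, would contain a boundary point ${\bf w}\in\mathbb{S}_{d-1}$ of $\mathbb{B}_d$ in its relative interior — and there the subdifferential of $\tilde\psi$ is a single vector ${\bf y}_0$, forcing $\nabla\psi \equiv {\bf y}_0$ on a set of positive measure in $\mathbb{B}_d$. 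Pushing forward, that would give $\rm P$ an atom at ${\bf y}_0$, contradicting that $\rm P$ has a density. The remaining configuration, $\Sigma$ with an exposed point on $\mathbb{S}_{d-1}$, is handled by the same atom-type argument together with the observation that $\mu_{\tilde\psi}$ is concentrated on $\mathbb{B}_d$.

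The one genuinely delicate point — and the place where the unbounded density $u_d$ bites — is that Theorem~4.10 in \cite{Fi} requires a \emph{two-sided} bound, or at least that the Monge-Amp\`ere measure be bounded above by a multiple of Lebesgue measure on a neighborhood of the contact set, whereas near the origin $\mu_\psi$ blows up. So before invoking Caffarelli's structure theorem I would separate two sub-cases: either the contact set $\Sigma$ stays away from ${\bf 0}$, in which case Lemma~\ref{lem:bounded} (restricted to a compact $M \subseteq \mathbb{B}_d \setminus \{{\bf 0}\}$, giving $\mu_\psi \le A_M(\ell_d(\cdot))^{1/d}$, hence in particular $\mu_\psi \le \Lambda\, \ell_d$ locally) supplies exactly the hypothesis needed; or $\Sigma \ni {\bf 0}$, where I would use the lower bound $\alpha_M \ell_d(A) \le \mu_\psi(A)$ from Lemma~\ref{lem:bounded} to run the complementary argument — a nontrivial contact set through the origin would have to be one-dimensional or higher, and restricting to a small ball around ${\bf 0}$ one gets, via the transport identity \eqref{eq:villani4.72} and $u_d$'s integrability, a contradiction between the lower bound on $\mu_\psi$ of the thin neighborhood of $\Sigma$ and the fact that $\partial\psi$ collapses that neighborhood onto a lower-dimensional set. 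I expect managing this origin case — getting a clean quantitative contradiction from the $u_d = (a_d|{\bf y}|^{d-1})^{-1}$ singularity rather than from a uniform upper bound — to be the main obstacle; the off-origin case is essentially Caffarelli's argument verbatim.
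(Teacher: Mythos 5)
Your proposal reproduces the easy half of the paper's argument but the two places where the real work lies are disposed of with reasoning that does not hold up. First, the case you dismiss in one sentence --- an exposed point of $\Sigma$ lying on $\mathbb{S}_{d-1}$ --- is precisely the crux. There, Theorem~4.10 of Figalli's book is unavailable not because $u_d$ blows up (it is bounded near the sphere) but because $\mu_{\tilde{\psi}}$ is concentrated on $\mathbb{B}_d$, so no lower bound $\lambda\,d\mathbf{x}\leq\mu_{\tilde{\psi}}$ holds on a full neighbourhood of the exposed point. Your ``atom-type argument'' does not fill this hole: a non-trivial contact set is typically a segment, hence Lebesgue-null, so no atom of $\rm P$ is produced; and the claim that the subdifferential at a sphere point in the relative interior of the contact set is a singleton, and that this would force $\nabla\psi$ to be constant on a set of positive measure, is unjustified (continuity of the gradient is exactly what is being proved). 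The paper instead reruns the proof of Theorem~4.10 by hand at the boundary point: sections $S_\epsilon$, John normalization, the one-sided bounds of Lemma~\ref{lem:bounded} (including the $(\ell_d(\cdot))^{1/d}$ upper bound), and the Alexandrov maximum principle when the exposed point is interior to $\mathrm{dom}(\tilde{\psi})$; and a separate cap/cone measure comparison ($\mathrm{U}_d(A_\theta)\gtrsim\theta^{d-1}$ versus $\mathrm{U}_d(A_\theta)\lesssim\theta^{d+1}$), using Lemma~\ref{lem:boundary} and the minimality of \eqref{eq:newdef2}, when it lies on $\partial(\mathrm{dom}(\tilde{\psi}))$. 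None of this is present, or replaceable by what you wrote.

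Second, your treatment of an unbounded $\Sigma$ rests on the same flawed step, whereas the paper kills unboundedness with a one-line device: add $w(\mathbf{z})=\tfrac12(|\mathbf{z}|-1)_+^2$, which vanishes on $\bar{\mathbb{B}}_d$ (so nothing changes where it matters) and forces the contact set into $\bar{\mathbb{B}}_d$. Finally, you locate the main obstacle at the origin, but that case never needs to be confronted: a compact convex set with more than one point has at least two exposed points, so one can always choose $\bar{\mathbf{y}}\neq\mathbf{0}$ and work in a ball $C_{\bar{\mathbf{y}}}$ with $\bar{C}_{\bar{\mathbf{y}}}\subset\mathbb{B}_d\setminus\{\mathbf{0}\}$, where Proposition~\ref{MongeAmpereMeasures}(ii) gives two-sided bounds and Theorem~4.10 applies verbatim --- which is your (and the paper's) easy case. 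So the proposal is correct only where the argument is routine, and the genuinely delicate boundary-sphere analysis is missing.
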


\begin{proof}
To prove this, assume   that the contrary holds true. Then, there exists ${\bf y}\in \mathbb{B}_d$ and 
${\bf t}\in\partial \tilde\psi({\bf y})$ such that, putting $l({\bf z}):=\tilde{\psi}({\bf y})+\left\langle {{\bf t}}, {\bf z}-{\bf y}\right\rangle$, the convex   set~$\Sigma:=\{{\bf z}:\,  \tilde{\psi}({\bf z})=l({\bf z})\}$ is not a singleton. By subtracting an affine function, we can assume~$\tilde{\psi}({\bf y})~\!=~\!0$ and~$\tilde{\psi}({\bf z})\geq 0$ for all ${\bf z}$; then, {
$\Sigma=\{{\bf z}:\, \tilde{\psi}({\bf z})=0\} = \{{\bf z}:\, \tilde{\psi}({\bf z})\leq 0\}$}, which  is closed since $\tilde{\psi}$ is lower semicontinuous. Also, by adding the convex function $w({\bf z}):=\frac 1 2 (|{\bf z}|-1)_+^2$ (note that~$\tilde{\psi}=\tilde{\psi}+w$ on $\bar{\mathbb{B}}_d$), 
we can assume that $\Sigma\subset \bar{\mathbb{B}}_d$. Being compact and convex,  $\Sigma$ equals the closed convex hull of its extreme points; as a consequence,  it 
 must have at least two exposed points (otherwise it would be empty or a singleton). {Let $\bar{{\bf y}}\in\bar{\mathbb{B}}_d\setminus\{\mathbf{0}\}$ be one of them.}
If~$\bar{{\bf y}}\in \mathbb{B}_d\setminus\{\mathbf{0}\}$, we consider a small ball $C_{\bar{{\bf y}}}$, say, around $\bar{{\bf y}}$,   such that $\bar{C}_{\bar{\bf y}}\subset {\mathbb{B}}_d\setminus\{\mathbf{0}\}$. Then $\partial \tilde\psi( \bar{C}_{\bar{\bf y}})$ is a compact set, and hence~$\partial \tilde\psi( \bar{C}_{\bar{{\bf y}}})\subset R\,\mathbb{B}_d$ for some $R>0$. By Proposition \ref{MongeAmpereMeasures}{\it (ii)}, we have  constants~$0<\lambda_{C_{\bar{{\bf y}}}}\leq \Lambda_{C_{\bar{{\bf y}}}}$ such that the Monge-Amp\`ere measure $\mu_{\tilde{\psi}}$ satisfies 
$\lambda_{C_{\bar{{\bf y}}}}d{\bf x}\leq \mu_{\tilde{\psi}} \leq \Lambda_{C_{\bar{{\bf y}}}}d{\bf x}$ in~$C_{\bar{{\bf y}}}$. But the set $\Sigma$ has an exposed point in $C_{\bar{{\bf y}}}$ and this contradicts Theorem~4.10 in \cite{Fi}. Consequently, we must assume that~$\bar{{\bf y}}\in \partial \mathbb{B}_d$. Observe that~$\tilde{\psi}(\bar{{\bf y}})~\!=~\!0$, hence~$\bar{{\bf y}}\in \mbox{dom}(\tilde{\psi})$. First consider  the case where~$\bar{{\bf y}}\notin \partial\big(\mbox{dom}(\tilde{\psi})\big)$. Let~$\mathbb{B}_r({\bf x}):={\bf x}+ r\mathbb{B}_d$ and~$\bar{\mathbb{B}}_r({\bf x})$ denote, respectively  the open and the closed ball of radius $r$ centered at $\bf x$. Then, for  $\eta>0$ small enough,~$\bar{\mathbb{B}}_\eta({\bf y}) \subset \mbox{dom}(\tilde{\psi})$; consequently,
there exists some $R_0$ such that $\partial \tilde{\psi}\big(\bar{\mathbb{B}}_\eta(\bar{\bf y})\big)\subset R_0\, \mathbb{B}_{d}$. For $\eta$ small enough, we further can ensure that~$\bar{\mathbb{B}}_\eta(\bar{\bf y})\subset 2\mathbb{B}_{d}$.

Without any loss of generality, let us assume  that $\bar{\bf y}=\textbf{e}_1$ where $\textbf{e}_1$ stands for the first vector in the canonical basis of $\mathbb{R}^d$  (we can use a rotation otherwise):
$$\Sigma\subset \big\lbrace {\bf z}=(z_1,\ldots,z_d)^\prime\in\R^{d}:\ z_1\leq 1\big\rbrace , \quad\text{and}\quad 
\Sigma\cap \big\lbrace {\bf z}=(z_1,\ldots,z_d)^\prime \in \R^{d}:\ z_1=1 \big\rbrace=\{ \textbf{e}_1 \}.$$  
For $\sigma\in(0,1)$ small enough, we have 
$$\Sigma \cap \big\lbrace {\bf z}\in \R^d:\ z_1\geq 1-\sigma \big\rbrace\subset \mathbb{B}_d\cap \big\lbrace {\bf z}\in \R^d:\ z_1\geq 1-\sigma \big\rbrace
\subset  {\mathbb{B}}_{\eta}(\textbf{e}_1).$$
For such $\sigma$,  defining
\begin{align}
\psi_{\epsilon}({\bf z}):=\tilde{\psi}({\bf z})-\epsilon(z_1-1+\sigma)\quad\text{and}\quad S_{\epsilon}:=\{{\bf z}:  \psi_{\epsilon}({\bf z}) < 0\},
\end{align}
observe that
\begin{align}\label{Hconv}
S_{\epsilon}\longrightarrow \Sigma \cap \big\lbrace {\bf z}\in \R^d:\ z_1\geq 1-\sigma \big\rbrace
\end{align}
in the Hausdorff distance\footnote{Recall that, for $A,B\,\subseteq \mathbb{R}^d$,  $\mathrm{d}_H(A,B):= \max\big\{\sup_{{\bf a}\in A}\inf_{_{\scriptstyle{\bf b}\in B}}\vert {\bf a}-{\bf b}\vert ,\,\sup_{{\bf b}\in B} \inf_{_{\scriptstyle{\bf a}\in A}}\vert {\bf a}-{\bf b}\vert \big\}.$} $\mathrm{d}$ as $\epsilon\to 0$. Hence, for $\epsilon >0$ small enough, the sets $S_{\epsilon}$ are bounded open convex subsets of the ball $\mathbb{B}_{\eta}(\textbf{e}_1)$. By Lemma~\ref{lem:bounded},  there exists some $M>0$ such that $$\mu_{\psi_\epsilon}(A)=\mu_{\tilde{\psi}}(A)\leq M
(\ell_d(A))^{1/d}$$ for every $A\subset S_\epsilon$ and  $\epsilon$ small enough.

Next,  fix ${\bf z}_0\in \mathbb{B}_d\cap \Sigma$ and $\delta>0$ such that $\bar{\mathbb{B}}_{\delta}({\bf z}_0)\subset \mathbb{B}_d\cap \mathbb{B}_\eta(\mathbf{e}_1)$
and
consider the normalizing map~$L_{\epsilon}$---namely, the affine transformation $L_{\epsilon}$ that normalizes\footnote{{A convex set $\Omega\subset \R^d$ is said to be normalized if $\mathbb{B}_d\subseteq \Omega \subseteq d\,\mathbb{B}_d$. For each open bounded convex set $\Omega$ there exists a unique invertible affine transformation $L$   normalizing $\Omega$ (this is  John's  celebrated  Lemma of convex analysis, see Lemma A.13 in \cite{Fi}). We refer to $L$ as the {\it normalizing map} and to   $L(\Omega)$ as the {\it normalized version} of $\Omega$.}} $S_{\epsilon}$; denote by $v_{\epsilon}$ the {normalized solution in $S_{\epsilon}^L:=L_{\epsilon}(S_{\epsilon})$ of $\mu_{v_\epsilon}=f\circ L_{\epsilon}^{-1}$ with the boundary condition $v_\epsilon=0$ on $\partial S_{\epsilon}^L$} ($v_\epsilon$ is the convex map that has {Monge-Amp\`ere measure $d\mu_{v_\epsilon}(\mathbf{x})=f\circ L_{\epsilon}^{-1}(\mathbf{x})d\mathbf{x}$} in $S_{\epsilon}^L$ and vanishes at the boundary of $S_{\epsilon}^L$; its existence and uniqueness is guaranteed, for instance, by Proposition 4.2 in~\cite{Fi}).
Since  $\mathbb{B}_d \subset S_{\epsilon}^L $, we have that $ L_{\epsilon}^{-1}(\mathbb{B}_d)\subset 2\mathbb{B}_d$ and,  therefore,  the map $L^{-1}$ satisfies
$$ \left|L_{\epsilon}({\bf x})-L_{\epsilon}({\bf z})\right| \geq \frac{1}{2}|{\bf x}-{\bf z}| \quad \text{for all}\quad {\bf x}, {\bf z} \in \mathbb{R}^{d}.$$
This implies that
\begin{equation}\label{chain}
{L_{\epsilon}(\mathbb{B}_d) { \supset} L_{\epsilon}\left(B_{\delta}\left({\bf z}_{0}\right)\right) \supset B_{\delta / 2}\left(L_{\epsilon} ({\bf z}_{0})\right)}.
\end{equation}
We consider the sets $ S_{\epsilon, \delta}^{L}:=\left\{{\bf z} \in S_{\epsilon}^{L}: \mathrm{d}_H\left({\bf z}, \partial S_{\epsilon}^L\right) \geq \delta /4\right\} $.
Now $L_{\epsilon}({\bf z}_{0})\in S_{\epsilon}^{L}$, a  normalized set (it contains the unit ball and is contained in the ball {of radius $d$, the dimension of the Euclidean space}). This implies that there exists a constant  $k_d>0$, depending only
on $d$ such that  (see Theorem 4.23 in \cite{Fi} or Lemma 3 in \cite{Ca3})
$$\ell_d\big(S_{\epsilon, \delta}^{L} \cap \mathbb{B}_{\delta /2}(L_{\epsilon} ({\bf z}_{0}))\big)\geq k_d  \delta^d. $$  In view of Lemma \ref{lem:bounded}, the subsequent remark, and the fact that $\bar{\mathbb{B}}_{\delta}({\bf z}_0)\subset \mathbb{B}_\eta(\mathbf{e}_1)$, we have that~$\mu_{\psi_\epsilon}$
is lower bounded over $\bar{\mathbb{B}}_{\delta}({\bf z}_0)$, that is, there exists $\lambda>0$ such that $\mu_{\psi_\epsilon}(A)\geq \lambda \ell_d(A)$
for every~$A~\!\subseteq~\!\bar{\mathbb{B}}_{\delta}({\bf z}_0)$. This and \eqref{chain} thus imply that $\mu_{v_{\epsilon}}$ is bounded from below  on $\mathbb{B}_{\delta /2}(L_{\epsilon} ({\bf z}_{0}))$. 
It follows that, {for some $\lambda >0$,}
$$\mu_{{v}_{\epsilon}}\left(S_{\epsilon, \delta}^{L}\right) \geq \lambda \ell_d\Big(S_{\epsilon, \delta}^{L} \cap \mathbb{B}_{\delta /2}\left(L_{\epsilon} ({\bf z}_{0})\right)\Big) \geq C\delta^{d}.$$
This implies that, for  $c'$ small enough, no ball of radius $ c'  {\delta}/{2}$ can contain $\partial v_{\epsilon}\big(S_{\epsilon, \delta}^{L}\big)$. 
 As a consequence,  there exists $c>0$ such that $ \sup _{{\bf p} \in \partial v_{\epsilon}\left(S_{\epsilon, \delta}^{L}\right)}|{\bf p}| \geq c {\delta}$. 
Using Corollary A.23 in \cite{Fi}, we conclude that
$$ {\big|\min _{S_{\epsilon}^L} v_{\epsilon}\big| \geq c^{\prime \prime}\left({\textstyle  {\delta}/{2}}\right)^{2}}$$
for some $c''>0$. On the other hand, {using Lemma~\ref{lem23} again to upper bound} $\mu_{\psi_\epsilon}$, we obtain 
$$\mu_{v_{\epsilon}}\left(S_{\epsilon}^L\right)=\mu_{\psi_\epsilon} \left(S_{\epsilon}\right)\leq M (\ell_d(2\mathbb{B}_d))^{1/d}$$
and, by  the Alexandrov maximum principle (e.g. Theorem 2.8. in \cite{Fi}), this implies that 
$$\left|v_{\epsilon}\left(L_{\epsilon} \textbf{e}_1\right)\right| \leq C \left(\mathrm{d}_H (L_{\epsilon} \mathbf{e}_1, \partial  S_{\epsilon}^L)\right)^{1 / d}.$$
This means that   the same arguments as in the proof of {Theorem 4.10 in \cite{Fi}} yield 
$$ \lim_{\epsilon\rightarrow 0}\mathrm{d}_H\left(L_{\epsilon} \mathbf{e}_1, \partial( S_{\epsilon}^L) \right)= 0\quad\text{and}\quad  \lim_{\epsilon\rightarrow 0}\frac{v_{\epsilon}\left(L_{\epsilon} \mathbf{e}_1\right)}{\min _{ S_{\epsilon}^L} v_{\epsilon}} = 1,$$
which is a contradiction. 

Finally,  consider the case where the exposed point of $\Sigma$ belongs
to $\partial \big( \mbox{dom}(\tilde{\psi})\big)$; here again, it can be assumed  that $\Sigma=\{{\bf z}: \tilde{\psi}({\bf z})=0\}$ and that the exposed point of $\Sigma $ is  $\textbf{e}_1$.
We also can   assume, without loss of generality, that $\mbox{dom}(\tilde{\psi})\subset \big\lbrace {\bf z}\in\R^{d}:\ z_1\leq 1\big\rbrace$. Hence, $\{c\,\textbf{e}_1:\, c\geq 0\}\subset \partial \tilde{\psi}(\textbf{e}_1)$. For small $\theta>0$ we consider the sets 
$$A_\theta:=\mathbb{B}_d\cap\{{\bf z}=(z_1,{\bf z}')\in\R\times\R^{d-1}:\, -\theta|{\bf z}'|\leq z_1-1\leq 0 \}$$
and
$$C_\theta:={\mathcal X}\cap\{{\bf x}=(x_1,{\bf x}')\in\R\times\R^{d-1}:\, x_1> 0,\, |{\bf x}'|\leq \theta x_1 \}.$$
Let ${\bf x}\in C_\theta$ and ${\bf z}\in\partial\tilde{\psi}^*({\bf x})$. Then ${\bf x}\in\partial\tilde{\psi}({\bf z})$ and, thanks to the  monotonicity of $\partial \tilde{\psi}$, we have 
that~$\langle {\bf x}-t\,\textbf{e}_1,{\bf z}-\textbf{e}_1 \rangle\geq 0$ for every $t\geq 0$, which entails $\langle {\bf x},{\bf z}-\textbf{e}_1 \rangle\geq 0$
(take $t=0$) and~$\langle \textbf{e}_1,{\bf z}-\textbf{e}_1 \rangle\geq 0$ (take $t\to \infty$). This means that $z_1\leq 1$ and $x_1(z_1-1)+\langle {\bf x}',{\bf z}'\rangle\geq 0$,
from which we deduce that $z_1-1\geq \theta|{\bf z}'|$. Since, by Lemma \ref{lem:boundary}, we have ${\bf z}\in \mathbb{B}_d$, it follows that $\partial\tilde\psi (A_\theta)\supset C_\theta$.
Also, since both~$\bf 0$ and $\textbf{e}_1$ belong to $\partial \tilde{\psi}(\R)^d\subset \bar{\mathcal X}$, which is a convex set with nonempty interior, we can argue as in pp. 8-9 of \cite{Fi3},  to conclude that $\ell_d({C_\theta}\cap 2\mathbb{B}_d)\gtrsim \theta^{d-1}$ for  $\theta>0$ small enough. From the transport equation,  
we have that 
\begin{equation}\label{lastcontradiction}
\mathrm{U}_d(A_\theta)= \int_{\partial \tilde{\psi}(A_\theta)} p({\bf x}) d{\bf x}\geq \int_{C_\theta} p({\bf x}) d{\bf x}\gtrsim\ell_d({C_\theta}\cap 2\mathbb{B}_d)\gtrsim \theta^{d-1},
\end{equation}
where we have used that $p$ is lower bounded on bounded subsets of $\mathcal X$. However, for small $\theta$, $A_\theta$ is well separated from $\bf 0$ and, consequently, $u_d$ is upper bounded {on $A_\theta$}. This means that
$$\mathrm{U}_d(A_\theta)\lesssim \ell_d(A_\theta)\lesssim \theta^{d+1},$$
which contradicts \eqref{lastcontradiction}. This completes the proof of the claim that $\tilde{\psi}$ (equivalently, $\psi$) is strictly convex in $\mathbb{B}_d$.
\end{proof}

We now can state and, based on Lemma \ref{lem:convex}, prove our main result, which extends Theorem~1.1 in Figalli  \cite{Fi2}  to the case of (bounded or unbounded) convexely supported distributions.

\begin{Theorem}\label{th:homeomorphism} Let  $\rm P$ be a probability measure with  density $p$ supported on the open convex set~$\mathcal{X}~\!\subseteq~\!\mathbb{R}^d$. 

(i) If $p$ satisfies \eqref{upperlowwer},  there exists a compact convex set  $K$  with Lebesgue measure $0$ such that  the center-outward quantile function $\mathbf{Q}_\pm:=\nabla \psi$ and the center-outward distribution function $\mathbf{F}_\pm:=\nabla \psi^*$ are homeomorphisms between  $\mathbb{B}_d\setminus \{ {\bf 0}\}$ and ${\mathcal X}\setminus K$, inverses of each other.

(ii) If, moreover,  $p\in\mathcal{C}_{\text{{\rm loc}}}^{k, \alpha}({\mathcal X}) $ for some  $k\in \N$ and $\alpha\in (0,1)$,  then $\mathbf{Q}_\pm$ and $\mathbf{F}_pm$ are diffeomorphisms of class $\mathcal{C}_{\text{{\rm loc}}}^{k+1, \alpha}$ between  $\mathbb{B}_d\setminus \{ {\bf 0}\}$ and ${\mathcal X}\setminus K$.
\end{Theorem}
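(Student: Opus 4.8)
The plan is to deduce both statements from the strict convexity of $\tilde\psi$ on $\mathbb{B}_d$ proved in Lemma~\ref{lem:convex}; since $\tilde\psi=\psi$ on the open ball, this amounts to strict convexity of $\psi$ on $\mathbb{B}_d$ (equivalently, $\partial\psi(\mathbf{y}_1)\cap\partial\psi(\mathbf{y}_2)=\emptyset$ for distinct $\mathbf{y}_1,\mathbf{y}_2\in\mathbb{B}_d$). For the exceptional set I would take $K:=\partial\psi(\mathbf{0})$: as the subdifferential of $\psi$ at an interior point of its domain, $K$ is nonempty, compact and convex, and $\ell_d(K)=\ell_d\big(\partial\psi(\{\mathbf{0}\})\big)=\mu_\psi(\{\mathbf{0}\})=0$ by the absolute continuity in Proposition~\ref{MongeAmpereMeasures}{\it (ii)}. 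Moreover $\partial\psi(\mathbb{B}_d)\subseteq\mathcal{X}$, so in particular $K\subseteq\mathcal{X}$: a supporting hyperplane of $\mathcal{X}$ at a hypothetical point of $\partial\psi(\mathbb{B}_d)\cap\partial\mathcal{X}$ would, by a thin-cap argument of exactly the type used in the last part of the proof of Lemma~\ref{lem:convex} (and on pp.~8--9 of \cite{Fi3}), force a spherical cap of $\mathrm{U}_d$-measure of order $\theta^{d-1}$ to be carried by $\partial\psi$ into a set of $\mathrm{P}$-measure of order at most $\theta^{d+1}$, using that $p$ is bounded above and away from $0$ on bounded subsets of $\mathcal{X}$.

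Next I would establish that $\mathbf{Q}_\pm$ and $\mathbf{F}_\pm$ are continuous and injective on the relevant sets. On any compact $M\subseteq\mathbb{B}_d\setminus\{\mathbf{0}\}$ the weight $|\mathbf{y}|^{1-d}$ is bounded above and below, $\partial\psi(M)$ is compact hence contained in some $R\,\mathbb{B}_d$, so Proposition~\ref{MongeAmpereMeasures}{\it (ii)} together with Assumption~A applied with that $R$ upgrades Lemma~\ref{lem:bounded} to a genuine two-sided bound $\lambda\,d\mathbf{y}\le\mu_\psi\le\Lambda\,d\mathbf{y}$ on $M$; with strict convexity of $\psi$, Caffarelli's regularity theory (Corollary~4.21 in \cite{Fi}) then gives $\psi\in\mathcal{C}^{1,\alpha}_{\mathrm{loc}}(\mathbb{B}_d\setminus\{\mathbf{0}\})$, so $\mathbf{Q}_\pm=\nabla\psi$ is continuous there, and strict convexity makes $\nabla\psi$ injective on $\mathbb{B}_d$. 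Dually, for $\mathbf{x}\in\mathcal{X}$ we have $\partial\varphi(\mathbf{x})\subseteq\mathbb{B}_d$ by Lemma~\ref{lem:boundary} and \eqref{eq:partial_bounded_for_1}, and since $\varphi=\psi^*$ with $\psi$ strictly convex on $\mathbb{B}_d$, $\partial\varphi(\mathbf{x})$ must be a singleton; hence $\varphi$ is differentiable throughout the open set $\mathcal{X}$ and $\mathbf{F}_\pm=\nabla\varphi$ is continuous there (Theorem~25.5 in \cite{Rockafellar}). Also $\nabla\varphi(\mathbf{x})=\mathbf{0}$ precisely when $\mathbf{x}\in\partial\varphi^*(\mathbf{0})=\partial\psi(\mathbf{0})=K$, and $\nabla\varphi$ is injective on $\mathcal{X}\setminus K$: $\nabla\varphi(\mathbf{x}_1)=\nabla\varphi(\mathbf{x}_2)=:\mathbf{u}$ gives $\mathbf{x}_1,\mathbf{x}_2\in\partial\varphi^*(\mathbf{u})=\partial\psi(\mathbf{u})$, which by strict convexity of $\psi$ forces $\mathbf{u}=\mathbf{0}$, i.e.\ $\mathbf{x}_1,\mathbf{x}_2\in K$.

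To finish {\it (i)}, I would check that the two maps restrict to mutually inverse bijections between $\mathbb{B}_d\setminus\{\mathbf{0}\}$ and the open set $\mathcal{X}\setminus K$. For $\mathbf{y}\in\mathbb{B}_d\setminus\{\mathbf{0}\}$ one has $\nabla\psi(\mathbf{y})\in\mathcal{X}$ (first paragraph) and $\nabla\psi(\mathbf{y})\notin K$, since otherwise $\partial\psi(\mathbf{y})\cap\partial\psi(\mathbf{0})\neq\emptyset$; and $\nabla\varphi$ maps $\mathcal{X}\setminus K$ into $\mathbb{B}_d\setminus\{\mathbf{0}\}$ by the second paragraph. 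The identities $\nabla\varphi\circ\nabla\psi=\mathrm{id}$ on $\mathbb{B}_d$ and $\nabla\psi\circ\nabla\varphi=\mathrm{id}$ on the support of $\mathrm{P}$, valid almost everywhere by Proposition~10 in \cite{McCann} (recalled in Section~\ref{Section21}), then hold at \emph{every} point of $\mathbb{B}_d\setminus\{\mathbf{0}\}$, resp.\ $\mathcal{X}\setminus K$, since on these sets the two compositions are continuous by what precedes. Hence $\mathbf{Q}_\pm$ and $\mathbf{F}_\pm$ are mutually inverse continuous bijections between $\mathbb{B}_d\setminus\{\mathbf{0}\}$ and $\mathcal{X}\setminus K$, i.e.\ homeomorphisms, with $K$ compact, convex and Lebesgue-null; this is {\it (i)}.

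For {\it (ii)} I would bootstrap on $\mathbb{B}_d\setminus\{\mathbf{0}\}$. By Proposition~\ref{MongeAmpereMeasures}{\it (ii)}, $\psi$ is an Alexandrov solution of $\det D^2\psi=\big(a_d\,p(\nabla\psi)\,|\mathbf{y}|^{d-1}\big)^{-1}$; on a compact subset of $\mathbb{B}_d\setminus\{\mathbf{0}\}$ the right-hand side is bounded away from $0$ and $\infty$ (Assumption~A, and $|\mathbf{y}|^{1-d}$ bounded there) and H\"older continuous, because $|\mathbf{y}|^{1-d}$ is smooth off the origin and $p\circ\nabla\psi$ is H\"older by the $\mathcal{C}^{1,\alpha}_{\mathrm{loc}}$ regularity from {\it (i)} together with $p\in\mathcal{C}^{0,\alpha}_{\mathrm{loc}}(\mathcal{X})$. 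Caffarelli's interior $\mathcal{C}^{2,\alpha}$ estimate (see \cite{Ca2} and \cite{Fi}) then yields $\psi\in\mathcal{C}^{2,\beta}_{\mathrm{loc}}(\mathbb{B}_d\setminus\{\mathbf{0}\})$ for some $\beta>0$; since $D^2\psi\ge0$ and $\det D^2\psi>0$, $D^2\psi$ is positive definite, the equation $\log\det D^2\psi=-\log\big(a_d\,p(\nabla\psi)|\mathbf{y}|^{d-1}\big)$ is locally uniformly elliptic, and the standard Schauder bootstrap (with $p\circ\nabla\psi\in\mathcal{C}^{\min(k,m-1),\alpha}_{\mathrm{loc}}$ whenever $\psi\in\mathcal{C}^{m,\alpha}_{\mathrm{loc}}$) improves this to $\psi\in\mathcal{C}^{k+2,\alpha}_{\mathrm{loc}}(\mathbb{B}_d\setminus\{\mathbf{0}\})$, i.e.\ $\mathbf{Q}_\pm\in\mathcal{C}^{k+1,\alpha}_{\mathrm{loc}}$; since $\mathbf{F}_\pm=(\mathbf{Q}_\pm)^{-1}$ by {\it (i)} and $D\mathbf{Q}_\pm=D^2\psi$ is invertible, the inverse function theorem gives $\mathbf{F}_\pm\in\mathcal{C}^{k+1,\alpha}_{\mathrm{loc}}(\mathcal{X}\setminus K)$, so both are diffeomorphisms. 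The real obstacle has already been overcome in Lemma~\ref{lem:convex} (where the unbounded singularity of $u_d$ at the origin is genuinely confronted); the remaining work is bookkeeping — replacing the non-standard bound of Lemma~\ref{lem:bounded} by a true two-sided Caffarelli bound, which succeeds precisely \emph{away from the origin} where $|\mathbf{y}|^{1-d}$ is harmless; verifying $\partial\psi(\mathbb{B}_d)\subseteq\mathcal{X}$; promoting the $\mathrm{P}$-a.e.\ McCann inversions to pointwise inverses via continuity; and tracking H\"older exponents through the composition $p\circ\nabla\psi$ in the bootstrap.
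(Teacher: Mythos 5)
Your overall architecture is sound and, for the regularity part, essentially the paper's: strict convexity from Lemma~\ref{lem:convex}, the two-sided bounds $\lambda\,d\mathbf{y}\le\mu_{\tilde\psi}\le\Lambda\,d\mathbf{y}$ on compacta of $\mathbb{B}_d\setminus\{\mathbf{0}\}$, and Caffarelli's $\mathcal{C}^{1,\alpha}$ theory; for part \emph{(ii)} your Schauder bootstrap just reproves what the paper gets by citing Remark~4.44 in \cite{Fi}, which is fine. Where you genuinely diverge is in part \emph{(i)}: instead of the paper's appeal to Brouwer's invariance of domain, you construct the inverse explicitly, taking $K:=\partial\psi(\mathbf{0})$, proving everywhere-differentiability of $\varphi$ on $\mathcal{X}$ from strict convexity of $\psi$ together with Lemma~\ref{lem:boundary}, and upgrading the McCann a.e.\ inversion identities to pointwise ones by continuity. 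That route is attractive (it identifies $K$ concretely and yields surjectivity onto $\mathcal{X}\setminus K$ without topology), but it contains one genuine gap: the claim $\partial\psi(\mathbb{B}_d)\subseteq\mathcal{X}$, i.e.\ that no point of the punctured ball is sent to $\partial\mathcal{X}$. Your proposed justification—a thin-cap estimate ``of the type used at the end of Lemma~\ref{lem:convex}'', comparing a cap of $\mathrm{U}_d$-mass of order $\theta^{d-1}$ with a target of $\mathrm{P}$-mass of order $\theta^{d+1}$—does not work here. The exponent $\theta^{d+1}$ in Lemma~\ref{lem:boundary} (and in the boundary case of Lemma~\ref{lem:convex}) comes from the uniform curvature of the unit sphere; the boundary of a general convex support (a half-space, a polytope) is flat, and the corresponding wedge $\{\mathbf{z}\in\mathcal{X}\cap R\,\mathbb{B}_d:\langle \mathbf{z}-\mathbf{x}^*,\nu\rangle\ge -2\theta|\mathbf{z}-\mathbf{x}^*|\}$ has Lebesgue measure only of order $\theta$. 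Monotonicity then gives $c\,\epsilon^d\theta^{d-1}\le C\,\theta$, which is no contradiction for $d\ge2$, so this argument cannot rule out boundary values of $\nabla\psi$. Note that this missing fact is also used silently when you extend $\nabla\varphi\circ\nabla\psi=\mathrm{id}$ to all of $\mathbb{B}_d\setminus\{\mathbf{0}\}$: continuity of the composition at $\mathbf{y}$ requires $\nabla\psi(\mathbf{y})\in\mathcal{X}$, since $\nabla\varphi$ is only shown continuous on $\mathcal{X}$.

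The repair is exactly the device you tried to avoid: once $\nabla\psi$ is continuous and injective on $\mathbb{B}_d\setminus\{\mathbf{0}\}$, invariance of domain makes its image open, and an open set contained in $\bar{\mathcal X}$ (by \eqref{eq:partial_bounded_for_1_phi2}) is contained in the interior $\mathcal{X}$, since for a convex open $\mathcal X$ the interior of $\bar{\mathcal X}$ is $\mathcal X$. With that single insertion, your explicit-inverse scheme (including $K=\partial\psi(\mathbf{0})$ compact, convex, Lebesgue-null, and the exclusion $\nabla\psi(\mathbf{y})\notin K$ by strict convexity) goes through and is a legitimate, somewhat more detailed alternative to the paper's terse conclusion that the image is ``necessarily'' $\mathcal{X}\setminus K$.
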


\begin{proof} 
Assumption~A implies, for any closed ball   $B\subseteq \mathbb{B}_d\setminus\{\mathbf{0}\}$, the existence of  constants $0~\!<~\!\lambda_B~\!\leq~\!\Lambda_B$ such that 
{$\lambda_B d{\bf x}\leq \mu_{\tilde{\psi}} \leq \Lambda_B d{\bf x}$}. It follows from Caffarelli's regularity theory (see Corollary 4.21 in~\cite{Fi})  that $\psi$ is locally of class $C^{1,\alpha}$.
The constant $\alpha>0$ depends on $\lambda$ and $\Lambda$ and, consequently, we cannot conclude that,  for some $\alpha>0$,  $\psi\in C^{1,\alpha}_{\text{loc}}(\mathbb{B}_d\setminus\{\mathbf{0}\})$. However, $\psi$ is continuously differentiable on~$\mathbb{B}_d\setminus\{\mathbf{0}\}$ and, therefore, $\mathbf{Q}_\pm=\nabla \psi$ is a (single-valued) continuous function on $\mathbb{B}_d\setminus\{\mathbf{0}\}$.
Furthermore,   the  strict convexity of $\psi$ (Lemma~\ref{lem:convex}) implies that $\mathbf{Q}_\pm$ is injective. By Brouwer's theorem on invariance of domain {(see, e.g., Theorem 2B.3, p. 172 in \cite{Hatcher})},  $\mathbf{Q}_\pm(\mathbb{B}_d\setminus\{\mathbf{0}\})$ is open and 
$\mathbf{Q}_\pm$ is a homeomorphism between~$\mathbb{B}_d\setminus\{\mathbf{0}\}$ and $\mathbf{Q}_\pm(\mathbb{B}_d\setminus\{\mathbf{0}\})$. But then, necessarily, $\mathbf{Q}_\pm(\mathbb{B}_d\setminus\{\mathbf{0}\})={\mathcal X}\setminus K$, which  completes the proof of part {\it (i)} of the theorem.  

Turning to part {\it (ii)},  assume that $p\in\mathcal{C}_{\text{{\rm loc}}}^{k, \alpha}(X) $, fix $\mathbf{x}\in \mathbb{B}_d$, and  consider  a neighbourhood $V$ of~$\mathbf{x}$ such that its closure $\bar V$ is contained in  $\mathbb{B}_d\setminus\{\mathbf{0}\}$. Now, $\psi$ is strictly convex over~$V$  
and there exist constants~$0<\lambda_V<\Lambda_V$ such that $\lambda_V d{\bf x}\leq \mu_{\tilde{\psi}} \leq \Lambda_V d{\bf x}$ on $V$.
Note that $u_d\in\mathcal{C}_{\text{{\rm loc}}}^{k, \alpha}(V) $ for every $k$ and $\alpha$. Hence, we can apply Remark 4.44 in \cite{Fi} to conclude that $\nabla \psi \in \mathcal{C}_{\text{{\rm loc}}}^{k+1, \alpha}(V)$. This completes the proof.
\end{proof}

To conclude this section,  observe that the center-outward quantile function of a probability measure~$\rm P$ satisfying the assumptions of Theorem \ref{th:homeomorphism} may fail to be continuous (singled-valued) at the origin. However, the center-outward distribution function is single-valued (and consequently continuous) at every point of the support of $\rm P$), since the points in the set $K$ which had to be removed to guarantee that $\mathbf{Q}_\pm$ is a homeomorphism are all mapped by $\mathbf{F}_\pm$ to the origin.

\section{Some further properties of center-outward distribution and quantile functions.}\label{Section3}

To conclude this note, we present three results that more or less directly follow as consequences of Theorem~\ref{th:homeomorphism}. 
The first one is about the 
asymptotic invariance of  center-outward distribution functions; the second one deals with  the ability of center-outward quantile functions to capture the shape of a convex supporting set; the third one is a result on the shape of quantile contours, which turn out to satisfy a kind of relaxed version of convexity, connected to the so-called ``lighthouse convexity'' property (see, e.g., pp. 263-264 in \cite{Cholaquidadisetal}).


A classical univariate distribution function $F$
trivially satisfies 
$$\lim_{x\to-\infty} F(x)=0\quad\text{and}\quad \lim_{x\to\infty} F(x)=1,$$
hence, in terms of  the univariate center-outward distribution function $F_\pm:=2F-1$, 
$$\lim_{t\to\infty} F_{\pm}(tu)=u\qquad\text{for all $u$ such that $|u|=1$.}$$
 Let us show that this carries over to ${\bf F}_\pm$ in general dimension. Keeping the notation from the previous sections, we establish the following result. 

\begin{Proposition}\label{lem:infinito}
Let the probability measure $\rm P$ have a     density on $\mathbb{R}^d$. For any $\bf u$ on the unit sphere~$\mathbb{S}_{d-1}$, any sequence $(t_n)_{n\in \N}$ of real numbers such that $t_n\to\infty$, and any ${\bf y}_n\in \partial \varphi(t_n {\bf u})$, 
 $$
\lim_{n\rightarrow \infty }{\bf y}_n={\bf u}.$$
\end{Proposition}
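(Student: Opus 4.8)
The plan is to exploit the defining transport relation $\nabla\varphi\sharp\mathrm{P}=\mathrm{U}_d$ together with the fact, recorded in \eqref{eq:partial_bounded_for_1}, that $\partial\varphi(\mathbb{R}^d)\subseteq\bar{\mathbb{B}}_d$. Since $|{\bf y}_n|\leq 1$, the sequence $({\bf y}_n)$ is bounded, so it suffices to show that every convergent subsequence has limit ${\bf u}$; passing to such a subsequence, write ${\bf y}_n\to{\bf v}$ with $|{\bf v}|\leq 1$. The goal becomes to rule out ${\bf v}\neq{\bf u}$.

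First I would record the monotonicity inequality coming from cyclical monotonicity of $\partial\varphi$: fixing any reference point ${\bf x}_0$ with ${\bf y}_0\in\partial\varphi({\bf x}_0)$, we have $\langle {\bf y}_n-{\bf y}_0,\, t_n{\bf u}-{\bf x}_0\rangle\geq 0$, hence $\langle{\bf y}_n-{\bf y}_0,{\bf u}\rangle\geq \langle {\bf y}_n-{\bf y}_0,{\bf x}_0\rangle/t_n\to 0$, so in the limit $\langle {\bf v}-{\bf y}_0,{\bf u}\rangle\geq 0$. Taking ${\bf y}_0$ to range over $\partial\varphi$ of points near the origin (where the potential $\varphi$ is $1$-Lipschitz, so its subgradients lie in $\bar{\mathbb{B}}_d$ and can be made small in suitable directions — more precisely, using that $\nabla\varphi\sharp\mathrm{P}=\mathrm{U}_d$ charges every neighbourhood of ${\bf 0}$, there are points mapped arbitrarily close to ${\bf 0}$), one gets $\langle{\bf v},{\bf u}\rangle\geq 0$, and with a bit more care $\langle {\bf v},{\bf u}\rangle\geq \langle {\bf w},{\bf u}\rangle$ for $\mathrm{U}_d$-a.e.\ ${\bf w}\in\mathbb{B}_d$, forcing $\langle{\bf v},{\bf u}\rangle\geq 1$. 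Combined with $|{\bf v}|\leq 1$ and Cauchy--Schwarz this yields ${\bf v}={\bf u}$.

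The alternative (and I think cleaner) route is a measure-theoretic comparison in the spirit of Lemma~\ref{lem:boundary}. Suppose ${\bf v}\neq{\bf u}$; then for large $n$ the points ${\bf y}_n$ lie in a fixed cap $\mathcal{D}$ of $\bar{\mathbb{B}}_d$ that stays a positive distance away from ${\bf u}$. Using the transport equation \eqref{eq:villani47} in the form $\mathrm{P}(\partial\varphi^{*}(A))=\mathrm{U}_d(A)$ and the monotonicity of $\partial\varphi$, one shows that the preimage $\partial\varphi^{*}$ of a small spherical cap $\mathcal{C}_\rho$ around ${\bf u}$ (intersected with $\mathbb{B}_d$) must contain points $t_n{\bf u}$ with $t_n\to\infty$ for the relation ${\bf y}_n\in\partial\varphi(t_n{\bf u})$ to be compatible with ${\bf y}_n\to{\bf v}\notin\overline{\mathcal{C}_\rho}$ — i.e.\ the ray through ${\bf u}$ is eventually transported \emph{outside} $\mathcal{C}_\rho$, so $\mathrm{U}_d(\mathcal{C}_\rho)$ is charged only by a bounded region of $\mathbb{R}^d$ along that direction, while $\mathrm{U}_d(\mathcal{C}_\rho)>0$; combining with the fact that any bounded set has finite $\mathrm{P}$-mass going to a contradiction as the cap shrinks in the right way. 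Either way, the argument is elementary once the setup is right.

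The main obstacle, I expect, is the lack of any regularity or nonvanishing hypothesis on $p$ here: $\mathrm{P}$ is only assumed to have a density on $\mathbb{R}^d$, so one cannot invoke Theorem~\ref{th:homeomorphism} or continuity of $\nabla\varphi$, and the set $\partial\varphi(t_n{\bf u})$ may genuinely be multivalued. Thus the whole argument must run purely through (cyclical) monotonicity of $\partial\varphi$, the containment \eqref{eq:partial_bounded_for_1}, and the push-forward identity $\nabla\varphi\sharp\mathrm{P}=\mathrm{U}_d$ — in particular one must be careful that $\mathrm{U}_d$ assigns positive mass to \emph{every} relatively open subset of $\mathbb{B}_d$, which is what prevents the optimal map from ``avoiding'' a neighbourhood of ${\bf u}$ and is the pivot of the contradiction.
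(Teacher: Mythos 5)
Your first route is exactly the paper's argument: use monotonicity of $\partial\varphi$ between $(t_n{\bf u},{\bf y}_n)$ and an arbitrary point of the graph, divide by $t_n$ and pass to the limit along a convergent subsequence to conclude that $\partial\varphi(\mathbb{R}^d)$ lies in the halfspace $\{\langle\cdot-{\bf v},{\bf u}\rangle\le 0\}$, then use $\nabla\varphi\sharp\mathrm{P}=\mathrm{U}_d$ (and the fact that $\mathrm{U}_d$ charges every relatively open subset of $\mathbb{B}_d$) to force $\langle{\bf v},{\bf u}\rangle\ge 1$ and hence ${\bf v}={\bf u}$ by Cauchy--Schwarz. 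This is correct and essentially identical to the paper's proof (the aside about points near the origin is superfluous, and the sketched second, measure-comparison route is not needed).
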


\begin{proof}
It follows from \eqref{eq:partial_bounded_for_1}   that ${\bf y}_n\in \bar{\mathbb{B}}_d$. Hence, by compactness, there exists  a subsequence along which 
${\bf y}_n \rightarrow {\bf y}_\infty\in\bar{\mathbb{B}}_d$. 
On the other hand, monotonicity of the subdifferential implies that, for all~${\bf x}\in \R^d$ and ${\bf y}\in \partial \varphi ({\bf x})$,
\begin{align*}
\langle {\bf y} -{\bf y}_n, {\bf x}- t_n {\bf u}\rangle\geq 0
\end{align*}
or, equivalently, for all ${\bf x}\in \R^d$ and ${\bf y}\in \partial \psi ({\bf x})$, 
\begin{align*}
\langle {\bf y}-{\bf y}_\infty, {\bf x}\rangle+ \langle  {\bf y}_\infty -{\bf y}_n, {\bf x}\rangle\geq t_n \left( \langle {\bf y}-{\bf y}_\infty,{\bf u}\rangle+\langle {\bf y}_\infty-{\bf y}_n, {\bf u}\rangle\right).
\end{align*}
Fixing $\epsilon >0$ and $N=N(\epsilon)$ such that $|{\bf y}_n-{\bf y}_\infty |<\epsilon$ for all $n\geq N$, we obtain
\begin{align*}
\langle {\bf y}-{\bf y}_\infty, {\bf x}\rangle+ \epsilon |{\bf x}|\geq t_n \left( \langle {\bf y}-{\bf y}_\infty,{\bf u}\rangle-\epsilon \right).
\end{align*}
Hence, for $n$ large enough,  $\langle {\bf y}-{\bf y}_\infty,{\bf u}\rangle-\epsilon<0$ for all ${\bf x}\in \R^d$ and $ {\bf y}\in \partial \varphi ({\bf x})$. Since $\epsilon>0$ is arbitrary, we conclude that
$$\partial \varphi (\R^d)\subset S:=\{{\bf y}: \ \langle {\bf y}-{\bf y}_\infty,{\bf u}\rangle\leq 0\},$$
which is a hyperplane. Now, the fact that $\nabla \varphi$ pushes $\rm P$ forward to $\mathrm{U}_d$ implies that 
$\partial \varphi (\R^d)$ contains almost every ${\bf x}\in \mathbb{B}_d$.
Hence, $\mathbb{B}_d\subset S$, which only can happen if ${\bf y}_\infty={\bf u}$. 
\end{proof}

Under additional smoothness assumptions on $\rm P$,  the announced result for ${\bf F}_\pm$ follows as a corollary.
\begin{Corollary}
Let $\rm P$ satisfy the assumptions in Proposition \ref{MongeAmpereMeasures}{\it (ii)}. Then, for any $\bf u$ on the unit sphere~$\mathbb{S}_{d-1}$ and any sequence $(t_n)_{n\in \N}$ of real numbers such that $t_n\to\infty$, we have
$$\lim_{n\to\infty} {\bf F}_{\pm}(t_n{\bf u})={\bf u}.$$
\end{Corollary}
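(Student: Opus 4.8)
The plan is to deduce the corollary from Proposition~\ref{lem:infinito}. That proposition already asserts that every selection $\mathbf{y}_n\in\partial\varphi(t_n\mathbf{u})$ tends to $\mathbf{u}$ as soon as $\rm P$ has a density, so the only thing left to check is that, under the stronger hypotheses now in force, $\mathbf{F}_\pm(t_n\mathbf{u})$ is an unambiguously defined element of $\partial\varphi(t_n\mathbf{u})$, i.e.\ that $\varphi$ is differentiable at $t_n\mathbf{u}$. I would in fact establish the stronger statement that, whenever $\rm P$ satisfies the hypotheses of Proposition~\ref{MongeAmpereMeasures}{\it (ii)}, the convex potential $\varphi$ is differentiable at \emph{every} point of $\mathbb{R}^d$. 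The ingredients are that $\varphi$ is finite on all of $\mathbb{R}^d$ with $\partial\varphi(\mathbb{R}^d)\subseteq\bar{\mathbb{B}}_d$ (as recorded around \eqref{eq:partial_bounded_for_1}), that $\psi$ is lsc convex so that $\varphi^*=\psi$ and hence $\mathbf{y}\in\partial\varphi(\mathbf{x})$ iff $\mathbf{x}\in\partial\psi(\mathbf{y})$, and that, by Lemma~\ref{lem:convex}, $\psi$ is strictly convex on $\mathbb{B}_d$.

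For the differentiability claim I would argue by contradiction. If $\varphi$ fails to be differentiable at some $\mathbf{x}$, then the convex set $\partial\varphi(\mathbf{x})$ contains a nondegenerate segment $[\mathbf{y}_1,\mathbf{y}_2]$, and by \eqref{eq:partial_bounded_for_1} this segment lies in $\bar{\mathbb{B}}_d$. For every $\mathbf{y}\in[\mathbf{y}_1,\mathbf{y}_2]$ one has $\mathbf{x}\in\partial\psi(\mathbf{y})$, so the affine map $\mathbf{z}\mapsto\psi(\mathbf{y}_1)+\langle\mathbf{x},\mathbf{z}-\mathbf{y}_1\rangle$ stays below $\psi$ while agreeing with it at both $\mathbf{y}_1$ and $\mathbf{y}_2$; by convexity it must then coincide with $\psi$ on all of $[\mathbf{y}_1,\mathbf{y}_2]$, i.e.\ $\psi$ is affine there. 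Since $\bar{\mathbb{B}}_d$ is strictly convex (its boundary sphere contains no line segment), the open segment $(\mathbf{y}_1,\mathbf{y}_2)$ is contained in $\mathbb{B}_d$, so $\psi$ is affine on a segment inside $\mathbb{B}_d$, contradicting Lemma~\ref{lem:convex}. Hence $\partial\varphi(\mathbf{x})$ is a singleton for every $\mathbf{x}$, that is, $\varphi\in C^1(\mathbb{R}^d)$ and $\mathbf{F}_\pm=\nabla\varphi$ is everywhere single-valued.

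Granting this, the corollary is immediate: for each $n$, $\mathbf{F}_\pm(t_n\mathbf{u})=\nabla\varphi(t_n\mathbf{u})\in\partial\varphi(t_n\mathbf{u})$, so Proposition~\ref{lem:infinito} yields $\mathbf{F}_\pm(t_n\mathbf{u})\to\mathbf{u}$. The one substantive step is the global differentiability of $\varphi$, and its delicate point is that $\psi$ is not claimed to be strictly convex up to $\mathbb{S}_{d-1}$, so one cannot simply quote essential smoothness of the Legendre transform; it is precisely the bound $\partial\varphi(\mathbb{R}^d)\subseteq\bar{\mathbb{B}}_d$ together with strict convexity of the ball that forces any potential flat portion of $\psi$ to reach into the open ball $\mathbb{B}_d$, where Lemma~\ref{lem:convex} applies. (When $\mathcal{X}=\mathbb{R}^d$ one could instead observe that $t_n\mathbf{u}$ eventually lies in $\mathcal{X}\setminus K$, where $\mathbf{F}_\pm$ is already single-valued by Theorem~\ref{th:homeomorphism}; the argument above, however, also covers bounded, and more generally proper convex, supports.)
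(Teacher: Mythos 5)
Your proof is correct, and its skeleton coincides with what the paper intends: the corollary is meant to follow at once from Proposition~\ref{lem:infinito}, since ${\bf F}_\pm(t_n{\bf u})$ is an element of $\partial\varphi(t_n{\bf u})$. What you add, and what the paper never spells out, is the verification that ${\bf F}_\pm(t_n{\bf u})$ is actually well defined: the remark following Theorem~\ref{th:homeomorphism} only records single-valuedness of ${\bf F}_\pm$ on the support of $\rm P$, whereas for bounded (or otherwise proper) convex $\mathcal X$ the points $t_n{\bf u}$ eventually leave $\mathcal X$, so some argument is genuinely needed there. Your argument for global differentiability of $\varphi$ is sound: if $\partial\varphi({\bf x})$ contained two distinct points ${\bf y}_1\neq{\bf y}_2$, then by Fenchel duality (using that the extended $\psi$ is proper lsc convex, so $\varphi^*=\psi$) one gets $\psi({\bf y}_i)=\langle {\bf x},{\bf y}_i\rangle-\varphi({\bf x})$ and, by convexity, $\psi$ is affine with slope ${\bf x}$ on the whole segment $[{\bf y}_1,{\bf y}_2]$; since \eqref{eq:partial_bounded_for_1} places the segment in $\bar{\mathbb{B}}_d$ and the Euclidean ball is strictly convex, the open segment lies in $\mathbb{B}_d$, contradicting the strict convexity of $\tilde\psi=\psi$ on $\mathbb{B}_d$ established in Lemma~\ref{lem:convex} (whose hypotheses are exactly those of Proposition~\ref{MongeAmpereMeasures}(ii)). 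Thus $\partial\varphi({\bf x})$ is a singleton for every ${\bf x}\in\mathbb{R}^d$, $\varphi\in C^1(\mathbb{R}^d)$, and Proposition~\ref{lem:infinito} applies verbatim. In short: same route as the paper, but you close a small gap the paper leaves implicit, and in doing so you obtain the slightly stronger fact that ${\bf F}_\pm$ is single-valued and continuous on all of $\mathbb{R}^d$, not merely on the support of $\rm P$.
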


Next, we include the announced simple result showing that the outer quantile contours of a convexely supported $\rm P$ approach (in Hausdorff distance) the boundary of its  support.

\begin{Lemma}\label{lem:CONVERGENCE}
Let $\rm P$ be a probability measure   on $\mathbb{R}^d$ with compact convex support  ${\mathcal X}$ and a density~$p$ such that $\lambda\leq p \leq \Lambda$ for some $0<\lambda\leq \Lambda$.  Then, as $R \rightarrow 1$, $\nabla \psi(R\,\mathbb{B}_d)$ tends to ${\mathcal X}$ in Hausdorff distance:
$$\lim_{R \rightarrow 1}\mathrm{d}_H\big(\nabla \psi(R\,\mathbb{B}_d),{\mathcal X}\big) =0.
$$
\end{Lemma}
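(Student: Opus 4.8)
The plan is to bound the two one-sided parts of the Hausdorff distance separately; throughout I keep the notation of Section~\ref{Section2}, writing $\mathcal X$ for the open convex support and $\overline{\mathcal X}$ for its (compact) closure, and noting that $\mathrm{d}_H(\mathcal X,\overline{\mathcal X})=0$ so that the asserted limit is the same as $\lim_{R\to 1}\mathrm{d}_H\big(\nabla\psi(R\,\mathbb{B}_d),\overline{\mathcal X}\big)=0$. One one-sided part is immediate: by convexity of $\mathcal X$ and \eqref{eq:partial_bounded_for_1_phi2}, $\nabla\psi(R\,\mathbb{B}_d)\subseteq\partial\psi(\mathbb{B}_d)\subseteq\overline{\mathcal X}$, hence $\sup_{{\bf a}\in\nabla\psi(R\,\mathbb{B}_d)}\mathrm{dist}({\bf a},\overline{\mathcal X})=0$ for every $R\in(0,1)$. (Under the present hypotheses $p$ satisfies Assumption~A, so Theorem~\ref{th:homeomorphism} applies and $\nabla\psi$ is single-valued and continuous on $\mathbb{B}_d\setminus\{{\bf 0}\}$; the single point ${\bf 0}$ is irrelevant below.) It therefore remains to show that $\sup_{{\bf b}\in\overline{\mathcal X}}\mathrm{dist}\big({\bf b},\nabla\psi(R\,\mathbb{B}_d)\big)\to 0$ as $R\to 1$.

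The engine I would use is this: $\mathrm{U}_d(\mathbb{B}_d\setminus R\,\mathbb{B}_d)=1-R$ (the radial part of $\mathrm{U}_d$ is uniform on $(0,1)$; equivalently, use \eqref{coarea}), and since $\nabla\psi$ pushes $\mathrm{U}_d$ forward to $\rm P$, for any ball $B\subseteq\mathbb{R}^d$ the set $E_B:=\{{\bf u}\in\mathbb{B}_d:\nabla\psi({\bf u})\in B\}$ satisfies
$$\mathrm{U}_d\big(E_B\cap R\,\mathbb{B}_d\big)\ \geq\ \mathrm{U}_d(E_B)-(1-R)\ =\ {\rm P}(B)-(1-R);$$
hence, as soon as ${\rm P}(B)>1-R$, this set has positive measure and so produces a point of $\nabla\psi(R\,\mathbb{B}_d)$ lying in $B$. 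Next I would fix $\epsilon>0$, cover the compact set $\overline{\mathcal X}$ by finitely many balls $B({\bf b}_i,\epsilon/2)$, $i=1,\dots,m$, with ${\bf b}_i\in\overline{\mathcal X}$, and observe that $\ell_d\big(B({\bf b}_i,\epsilon/2)\cap\mathcal X\big)>0$ for each $i$ — clear if ${\bf b}_i\in\mathcal X$, and if ${\bf b}_i\in\partial\mathcal X$ one pushes an interior point of $\mathcal X$ along the segment towards ${\bf b}_i$ to find an interior point of $\mathcal X$ inside $B({\bf b}_i,\epsilon/2)$ — so that $p\geq\lambda$ gives ${\rm P}\big(B({\bf b}_i,\epsilon/2)\big)\geq\lambda\,\ell_d\big(B({\bf b}_i,\epsilon/2)\cap\mathcal X\big)=:c_i>0$. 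Put $c:=\min_i c_i>0$ and pick $R_0\in(0,1)$ with $1-R_0<c$. Then for every $R\in[R_0,1)$ and every $i$ the mechanism above produces ${\bf a}_i\in\nabla\psi(R\,\mathbb{B}_d)\cap B({\bf b}_i,\epsilon/2)$, and any ${\bf b}\in\overline{\mathcal X}$, lying in some $B({\bf b}_i,\epsilon/2)$, satisfies $\mathrm{dist}\big({\bf b},\nabla\psi(R\,\mathbb{B}_d)\big)\leq|{\bf b}-{\bf b}_i|+|{\bf b}_i-{\bf a}_i|<\epsilon$. Combined with the first paragraph, $\mathrm{d}_H\big(\nabla\psi(R\,\mathbb{B}_d),\overline{\mathcal X}\big)<\epsilon$ for all $R\in[R_0,1)$, which proves the lemma.

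The one genuinely delicate point is making the estimate uniform in ${\bf b}\in\overline{\mathcal X}$, which is exactly what the finite covering together with the \emph{uniform} lower bound $p\geq\lambda$ deliver; everything else is routine bookkeeping. (The upper bound $p\leq\Lambda$ enters only through Theorem~\ref{th:homeomorphism}, i.e.\ only to make sense of $\nabla\psi(R\,\mathbb{B}_d)$ as a bona fide set; the convergence itself relies solely on $p\geq\lambda$, the pushforward identity, and $\mathrm{U}_d(\mathbb{B}_d\setminus R\,\mathbb{B}_d)\to 0$.)
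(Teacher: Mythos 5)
Your argument is correct, but it is a genuinely different route from the one in the paper. You work quantitatively from the pushforward identity: since the radial part of $\mathrm{U}_d$ is uniform, $\mathrm{U}_d(\mathbb{B}_d\setminus R\,\mathbb{B}_d)=1-R$, so any ball $B$ with ${\rm P}(B)>1-R$ must contain a point of $\nabla\psi(R\,\mathbb{B}_d)$; a finite $\epsilon/2$-cover of the compact set $\overline{\mathcal X}$ together with the uniform lower bound $p\geq\lambda$ (and the elementary fact that every such ball meets $\mathcal X$ in a set of positive Lebesgue measure, by convexity) then gives the one-sided bound uniformly, the other inclusion $\nabla\psi(R\,\mathbb{B}_d)\subseteq\overline{\mathcal X}$ being \eqref{eq:partial_bounded_for_1_phi2}. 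The paper instead argues softly and by contradiction: it notes that $R\mapsto\mathrm{d}_H\big(\nabla\psi(R\,\mathbb{B}_d),\mathcal X\big)$ is monotone, extracts (by compactness of $\mathcal X$) a limit ${\bf y}_\infty$ of points witnessing a putative gap $\epsilon>0$, and contradicts the fact that the sets $A_n=\nabla\psi\big((1-1/n)\mathbb{B}_d\big)$ exhaust (a dense subset of) $\mathcal X$ --- a fact that leans on the surjectivity/continuity statement of Theorem~\ref{th:homeomorphism}. What your approach buys is independence from the regularity machinery: you only use the a.e.-defined optimal map, the pushforward relation, and $p\geq\lambda$ (as you correctly note, $\Lambda$ plays no role in the convergence itself), and your estimate is effective --- quantifying $\ell_d\big(B({\bf b},\epsilon/2)\cap\mathcal X\big)$ from below for a convex body would even yield an explicit rate in $R$. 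What the paper's argument buys is brevity once Theorem~\ref{th:homeomorphism} is in place, at the price of being purely qualitative and of invoking that theorem. Both proofs are sound; yours is self-contained modulo only McCann's theorem and the definition of $\nabla\psi$.
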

\begin{proof}
Since $\nabla \psi(R\,\mathbb{B}_d)$ is contained in ${\mathcal X}$, we only need to analyse one of the two members of the maximum  defining  the Hausdorff distance: indeed, 
\begin{align*}
	\mathrm{d}_H(\nabla \psi(R\,\mathbb{B}_d),{\mathcal X})&= \max\{\sup_{{\bf a}\in \nabla \psi(R\,\mathbb{B}_d)}\inf_{_{\scriptstyle{\bf x}\in {\mathcal X}}}\vert {\bf a}-{\bf x}\vert,\sup_{{\bf x}\in {\mathcal X}} \inf_{_{\scriptstyle{\bf a}\in \nabla \psi(R\,\mathbb{B}_d)}}\vert {\bf a}-{\bf x}\vert \}\\
	&= \sup_{{\bf y}\in \mathbb{B}_d}\inf_{_{\scriptstyle{\bf b}\in R\,\mathbb{B}_d}}\vert \nabla \psi({\bf b})-\nabla \psi({\bf y})\vert  .
\end{align*}
On the other hand, since $r\,\mathbb{B}_d\subset R\,\mathbb{B}_d\subset {\mathcal X}$,  $\nabla \psi(r\,\mathbb{B}_d)\subset \nabla \psi(R\,\mathbb{B}_d)\subset {\mathcal X}$ for $r\leq R$, so that the mapping $R\mapsto\mathrm{d}_H(\nabla \psi(R\,\mathbb{B}_d),{\mathcal X})$ is a decreasing function. Suppose that $\mathrm{d}_H(\nabla \psi(R\,\mathbb{B}_d),{\mathcal X})$ does not tend to $0$ when $R$ tends to $1$. Then, there exists $\epsilon>0$ such that, for every $R$, $\mathrm{d}_H(\nabla \psi(R\,\mathbb{B}_d),{\mathcal X})>\epsilon$;  in particular, there exists ${\bf x}_R\in {\mathcal X}$ such that $\vert {\bf a}_R-{\bf x}_R\vert>\epsilon$ 
 for all ${\bf a}_R\in  \nabla \psi(R\,\mathbb{B}_d)$. 

Now,  for each $n\in \N $, consider the sequences $A_n:=\nabla\psi\big((1-1/n)\mathbb{B}_d\big)$ and 
  $ {\bf y}_{n}:={{\bf x}_{1-1/n}}\in {\mathcal X}$. 
  These sequences are such that  
  $$\inf_{{\bf a}\in A_m}\big\vert {\bf a}-{\bf y}_n\big\vert\geq \inf_{{\bf a}\in A_n} \big\vert  {\bf a}-{\bf y}_n\big\vert >\epsilon\quad\text{ for all $m\leq n$}.$$
By compactness, the sequence ${\bf y}_n$ admits a convergent subsequence, with limit  ${\bf y}_\infty$, say, with ${\bf y}_\infty \in {\mathcal X}$. This limit   satisfies $\inf_{{\bf a}\in A_n} \big\vert  {\bf a}- {\bf y}_\infty\big\vert >\epsilon$ for all $n\in \N$, which is not possible since  ${\mathcal X}=\bigcup_{n\in \N}A_{n}$.
\end{proof}

Our final result concerns the shape of the quantile contours of smooth probability measures (those satisfying the assumptions of Theorem~\ref{th:homeomorphism}).
As a consequence of Theorem~\ref{th:homeomorphism}, the sets $\mathbf{Q}_{\pm}(r\,\mathbb{B}_d)$ are bounded,  with
 connected boundary. Beyond this type of topological properties, results on the geometry of the quantile regions are not available. 
Here we prove that they satisfy a weak form of convexity. Recall from \cite{Cholaquidadisetal} that a set $B\subseteq \mathbb{R}^d$ is $\rho$-lighthouse convex if, from every point $\mathbf{x}$ in the boundary of $B$, there exists an open cone with vertex   $\mathbf{x}$ and opening angle $\rho>0$ which is contained in $\mathbb{R}^d\setminus B$. The limiting version of this concept (obtained as~$\rho\to 0$) is that for every point $\mathbf{x}$ in the boundary of $B$ there exists 
a ray emanating from $\mathbf{x}$ that does not intersect $B$ at any other point. This is precisely what can be proved for quantile sets.
\begin{Lemma}\label{lem:FORM}
Let $\rm P$ be a probability measure   on $\mathbb{R}^d$ satisfying the assumptions of Theorem~\ref{th:homeomorphism}.  Then, for all $r\in(0,1)$ and all $\bf y$ belonging to the boundary of $ \mathbf{Q}_{\pm}(r\,\mathbb{B}_d)$, there exists a ray $T$ emanating from $\mathbf{y}$ for which $\overline{ \mathbf{Q}_{\pm}(r\,\mathbb{B}_d)}\cap T=\{ \bf y \}$. 
\end{Lemma}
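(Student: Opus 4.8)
The plan is to take $T$ to be the ray issued from $\mathbf{y}$ in the radial direction of the $\mathbf{Q}_\pm$‑preimage of $\mathbf{y}$. Recall from Theorem~\ref{th:homeomorphism} that $\mathbf{Q}_\pm=\nabla\psi$ and $\mathbf{F}_\pm=\nabla\varphi$ (with $\varphi=\psi^{*}$) are mutually inverse homeomorphisms between $\mathbb{B}_d\setminus\{\mathbf{0}\}$ and $\mathcal{X}\setminus K$, that $\psi$ is differentiable on $\mathbb{B}_d\setminus\{\mathbf{0}\}$, and that (by Legendre duality) $\mathbf{b}\in\partial\varphi(\mathbf{y})$ if and only if $\mathbf{y}\in\partial\psi(\mathbf{b})$; with the convention $\mathbf{Q}_\pm(\mathbf{0}):=\partial\psi(\mathbf{0})=K$ one has $\mathbf{Q}_\pm(r\,\mathbb{B}_d)=\partial\psi(r\,\mathbb{B}_d)$. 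First I would check that any boundary point $\mathbf{y}$ of $\mathbf{Q}_\pm(r\,\mathbb{B}_d)$ is of the form $\mathbf{y}=\nabla\psi(\mathbf{b})$ with $|\mathbf{b}|=r$: since $\mathbf{y}\in\overline{\mathbf{Q}_\pm(r\,\mathbb{B}_d)}$ there are $\mathbf{x}_n=\nabla\psi(\mathbf{b}_n)\to\mathbf{y}$ with $|\mathbf{b}_n|<r$ (or $\mathbf{x}_n\in K$, $\mathbf{b}_n:=\mathbf{0}$), hence $\mathbf{b}_n\in\partial\varphi(\mathbf{x}_n)$ with $|\mathbf{b}_n|\le r$, and passing to a subsequence the closed graph of $\partial\varphi$ (a finite convex function, see \eqref{eq:partial_bounded_for_1}) yields $\mathbf{b}:=\lim\mathbf{b}_n\in\partial\varphi(\mathbf{y})$, $|\mathbf{b}|\le r$; if $|\mathbf{b}|<r$ then (using, from the bookkeeping below, that $K\subseteq\operatorname{int}\mathbf{Q}_\pm(r\,\mathbb{B}_d)$, so $\mathbf{b}\ne\mathbf{0}$) $\mathbf{y}=\nabla\psi(\mathbf{b})$ would be an interior point of $\mathbf{Q}_\pm(r\,\mathbb{B}_d)$, contradicting $\mathbf{y}\in\partial\mathbf{Q}_\pm(r\,\mathbb{B}_d)$. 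I then claim that $T:=\{\mathbf{y}+t\mathbf{b}:t\ge 0\}$ is the required ray.

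The core of the argument is an elementary monotonicity computation. Fix $t>0$, set $\mathbf{y}':=\mathbf{y}+t\mathbf{b}$, and suppose $\mathbf{y}'\in\overline{\mathbf{Q}_\pm(r\,\mathbb{B}_d)}$; exactly as above one produces $\mathbf{b}'\in\partial\varphi(\mathbf{y}')$ with $|\mathbf{b}'|\le r$. Monotonicity of the subdifferential of the convex function $\varphi$ gives $\langle\mathbf{b}'-\mathbf{b},\mathbf{y}'-\mathbf{y}\rangle\ge 0$, i.e. $t\langle\mathbf{b}'-\mathbf{b},\mathbf{b}\rangle\ge 0$, hence $\langle\mathbf{b}',\mathbf{b}\rangle\ge|\mathbf{b}|^{2}=r^{2}$; combined with Cauchy--Schwarz and $|\mathbf{b}'|\le r=|\mathbf{b}|$ this yields $r^{2}\le\langle\mathbf{b}',\mathbf{b}\rangle\le|\mathbf{b}'|\,|\mathbf{b}|\le r^{2}$, so every inequality is an equality, and the equality case of Cauchy--Schwarz together with $|\mathbf{b}'|=|\mathbf{b}|$ forces $\mathbf{b}'=\mathbf{b}$. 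Thus $\mathbf{y}'\in\partial\psi(\mathbf{b})$; but $0<|\mathbf{b}|=r<1$, so $\psi$ is differentiable at $\mathbf{b}$ and $\partial\psi(\mathbf{b})=\{\nabla\psi(\mathbf{b})\}=\{\mathbf{y}\}$, whence $t\mathbf{b}=\mathbf{0}$, a contradiction. Therefore $\mathbf{y}+t\mathbf{b}\notin\overline{\mathbf{Q}_\pm(r\,\mathbb{B}_d)}$ for every $t>0$; since $\mathbf{y}\in\overline{\mathbf{Q}_\pm(r\,\mathbb{B}_d)}\cap T$, this gives $\overline{\mathbf{Q}_\pm(r\,\mathbb{B}_d)}\cap T=\{\mathbf{y}\}$, as claimed.

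The point requiring the most care — delicate bookkeeping rather than genuine difficulty — is the claim $K\subseteq\operatorname{int}\mathbf{Q}_\pm(r\,\mathbb{B}_d)$, used above to guarantee that $\mathbf{Q}_\pm(r\,\mathbb{B}_d)$ is open and that $\mathbf{b}\ne\mathbf{0}$. To establish it, suppose $\mathbf{x}_n\to\mathbf{k}\in K$ with $\mathbf{x}_n\in\mathcal{X}\setminus K$; passing to a subsequence, $\mathbf{F}_\pm(\mathbf{x}_n)=\nabla\varphi(\mathbf{x}_n)\to\mathbf{w}\in\bar{\mathbb{B}}_d$ by \eqref{eq:partial_bounded_for_1}, and the closed graph of $\partial\varphi$ gives $\mathbf{w}\in\partial\varphi(\mathbf{k})$; since $\mathbf{k}\in K\subseteq\mathcal{X}$, Lemma~\ref{lem:boundary} excludes $|\mathbf{w}|=1$, so $|\mathbf{w}|<1$, and if $\mathbf{w}\ne\mathbf{0}$ then continuity of $\mathbf{Q}_\pm$ at $\mathbf{w}$ forces $\mathbf{x}_n=\mathbf{Q}_\pm(\mathbf{F}_\pm(\mathbf{x}_n))\to\mathbf{Q}_\pm(\mathbf{w})\in\mathcal{X}\setminus K$, contradicting $\mathbf{x}_n\to\mathbf{k}$; hence $\mathbf{F}_\pm(\mathbf{x}_n)\to\mathbf{0}$. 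Consequently no sequence in $\mathcal{X}\setminus K$ converging to $\mathbf{k}$ can satisfy $|\mathbf{F}_\pm|\ge r$, so some ball around $\mathbf{k}$ lies inside $\mathbf{Q}_\pm(r\,\mathbb{B}_d)$, which proves the claim. This step — the only place where the specific geometry of the problem enters, through Lemma~\ref{lem:boundary} (i.e. the fact that $\mathbf{F}_\pm$ cannot carry interior points of $\mathcal{X}$ near $\mathbb{S}_{d-1}$) — is where I expect the main, if modest, obstacle to lie; everything else reduces to the one‑line monotonicity inequality of the previous paragraph and to standard convex analysis.
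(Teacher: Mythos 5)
Your proposal is correct and follows essentially the same route as the paper: both take the ray from $\mathbf{y}$ in the radial direction of its $\mathbf{F}_\pm$-image (the paper rotates so that $\mathbf{y}=\mathbf{Q}_\pm(r\mathbf{e}_1)$ and uses $T=\{\mathbf{y}+t\mathbf{e}_1\}$) and rule out any other intersection point by monotonicity of the subdifferential, which forces the preimage of such a point outside $r\,\mathbb{B}_d$. Your write-up merely spells out in more detail the bookkeeping that the paper compresses into ``$\mathbf{Q}_\pm$ maps boundaries into boundaries'' (closed graph of $\partial\varphi$, $K\subseteq\operatorname{int}\mathbf{Q}_\pm(r\,\mathbb{B}_d)$, handling of closure points), so no substantive difference.
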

\begin{proof}
Assume, on the contrary, that there exists $\bf y$ in the boundary of $\mathbf{Q}_{\pm}(r\,\mathbb{B}_d)$  such that for every ray  $T=\{{\bf z}\in \R^d : \ {\bf z} ={\bf y }+t {\bf s}, \ t\geq 0\}$, 
there exists in $\overline{ \mathbf{Q}_{\pm}(r\,\mathbb{B}_d)}\cap T$ at least one point $\bf z$ distinct from~$\bf y$. Note that, necessarily, that point can be chosen in the boundary of $\mathbf{Q}_{\pm}(r\,\mathbb{B}_d)$. Now, since $\textbf{Q}_{\pm}$ is a homeomorphism, it maps boundaries into boundaries. Therefore, we can assume, up to a rotation, that ${\bf y}={\bf Q}_{\pm}( r {\bf e}_1)$. Monotonicity of $\textbf{Q}_{\pm}$ implies that 
\begin{align}\label{eq:mono_cond}
\langle \textbf{u} -r\textbf{e}_1 ,\textbf{Q}_{\pm}(\textbf{u})-\textbf{Q}_{\pm}(r\textbf{e}_1)\rangle \geq 0\qquad \text{for all $\textbf{u} \in \mathbb{B}_d$}.
\end{align}
However, if $\mathbf{Q}_{\pm}(\textbf{u})\in T=\{{\bf z}\in \R^d : \ {\bf z} ={\bf y }+t {\bf e}_1, \ t\geq 0\}$, then $\textbf{Q}_{\pm}(\textbf{u})= \textbf{Q}_{\pm}(r{\bf e}_1)+t {\bf e}_1$ for some~$t>0$. Hence, by \eqref{eq:mono_cond}  $\langle \textbf{u} -r \textbf{e}_1  , r {\bf e}_1 \rangle \geq 0  $. This implies that $\mathbf{u}\notin r\,\mathbb{B}_d$, thus contradicting the assumption that $T$ has a common point with the boundary of $\mathbf{Q}_{\pm}(r\,\mathbb{B}_d)$ other than $\bf y$. 
\end{proof}

\end{document}